\theoremstyle{change}
\newtheorem{thm}{Theorem}[section]
\newtheorem*{ThmA}{Theorem~A}
\newtheorem*{CorB}{Corollary~B}
\newtheorem*{ThmC}{Theorem~C}
\newtheorem{prop}[thm]{Proposition}
\newtheorem{lemma}[thm]{Lemma}
\newtheorem{definition}[thm]{Definition}
\newtheorem{remark}[thm]{Remark}
\numberwithin{equation}{section}
\newcommand{\cF}{\mathcal{F}}
\newcommand{\cO}{\mathcal{O}}
\newcommand{\cG}{\mathcal{G}}
\newcommand{\cC}{\mathcal{C}}
\newcommand{\cU}{\mathcal{U}}
\newcommand{\Om}{\Omega}
\newcommand{\om}{\omega}
\newcommand{\la}{\lambda}
\newcommand{\C}{\mathbb{C}}
\newcommand{\HH}{\mathbb{H}}
\newcommand{\N}{\mathbb{N}}
\newcommand{\Q}{\mathbb{Q}}
\newcommand{\R}{\mathbb{R}}
\newcommand{\Z}{\mathbb{Z}}
\newcommand{\PP}{\mathbb{P}}
\newcommand{\f}{\phi}
\newcommand{\sing}{\mathrm{Sing}}
\renewcommand{\=}{:=}
\renewcommand{\div}{\mathrm{div}}
\newcommand{\ord}{\mathrm{ord}}
\newcommand{\dto}{\dashrightarrow}
\newcommand{\ind}{\mathrm{Ind}}
\newcommand{\NS}{\mathrm{NS}_{\mathbb Q}}
\newcommand{\kod}{\mathrm{kod}}
\newcommand{\aut}{\mathrm{Aut}}
\newcommand{\PSL}{\mathrm{PSL}}
\newcommand{\GL}{\mathrm{GL}}
\begin{document}
\title{Foliations invariant by rational maps}
\author{Charles Favre}
\address{Charles Favre \\
Centre de Math\'ematiques Laurent Schwartz \\
\'Ecole Polytechnique \\
91128 Palaiseau Cedex France
.}
\email{favre@math.polytechnique.fr}

\author{Jorge Vit\'orio Pereira}
 \address{Jorge Vit\'orio Pereira \\ IMPA, Estrada
 Dona  Castorina, 110\\
 22460-320, Rio de Janeiro, RJ, Brazil}
 \email{jvp@impa.br}

\subjclass[2000]{Primary: 37F75, Secondary: 14E05, 32S65}
\thanks{The first author was partially supported by the project ECOS-Sud No.C07E01, and the second  by CNPq-Brazil.}
 \date{\today}

\begin{abstract}
We give a classification of pairs $(\cF, \f)$ where $\cF$ is a  holomorphic foliation on a projective surface and $\f$ is a non-invertible dominant rational map preserving $\cF$.
\end{abstract}

\maketitle


\section{Introduction}



Our goal is to give a classification of pairs $(\cF, \f)$ where $\cF$ is a holomorphic singular foliation on a projective surface $X$, and $\f: X \dto X$ is a dominant rational map preserving $\cF$.
When $\f$ is birational, the situation is completely understood, see~\cite{CF,JVPS}.
When $\f$ is not invertible, to the best of our knowledge the only result available in the literature is due to M. Dabija and M. Jonsson~\cite{DJ}, and concerns the classification of pencil of curves in $\PP^2$ preserved by holomorphic self-maps of the projective plane.

Our approach to this  problem is based on foliated Mori theory~\cite{Br,Br2,McQ,mendes} which address the problem of classification of holomorphic foliations on surfaces by describing the numerical properties of their cotangent bundle in suitable birational models. Our main result gives a classification of pairs $(\cF, \f)$ as above up to birational conjugacy.
Recall that a foliation $\cF$ has a rational first integral if it is defined by a pencil of curves in which case one says $\cF$ is tangent to this pencil.
\begin{ThmA}
 Suppose $\cF$ is a holomorphic singular foliation on a projective surface $X$ without rational first
integral, and $\f: X \dto X$ is a  dominant non-invertible rational map preserving $\cF$.
Then up to a birational conjugacy, there exists a $\f$-invariant Zariski open dense subset $\cU \subset X$
such that $\cU$ is a quotient of $\C^2$ by a discrete subgroup $\Gamma \subset \mathrm{Aff}(\C^2)$ acting discontinuously on $\C^2$, $X$ is an equivariant compactification of $\C^2/\Gamma$ and:
\begin{itemize}
 \item $\f$ lifts to an affine map on $\C^2$;
 \item $\cF$ is defined in $\C^2$ either by $dx$ or by $d(y+ e^x)$ in suitable affine coordinates.
\end{itemize}
\end{ThmA}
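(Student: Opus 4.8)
The plan is to run the foliated minimal model program on $\cF$, use the non-invertible map $\f$ to force $\kod(\cF)\le 1$, feed this into Brunella's classification after discarding the foliations with a rational first integral, and then read off an affine transverse structure from the dynamics of $\f$.

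\emph{First}, I would replace $(X,\cF)$ by a reduced (relatively minimal) model. Since $\f$ is non-invertible it does not conjugate the foliated MMP, so rather than trying to make $\f$ a morphism of $X$ one works on a resolution $\pi\colon\tilde X\to X$ of its indeterminacy, on which $\tilde\f:=\f\circ\pi$ is a genuine morphism, generically finite of degree $d\ge 2$, with $\tilde\f^{*}\cF=\pi^{*}\cF$. From the local normal forms of reduced singularities and the behaviour of the canonical bundle of a foliation under generically finite morphisms one gets a comparison $\pi^{*}K_{\cF}=\tilde\f^{*}K_{\cF}+\Delta$ with $\Delta$ supported on $\cF$-invariant curves; in particular $\f$ acts on $\bigoplus_{m\ge 0}H^{0}(X,mK_{\cF})$. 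A bookkeeping nuisance here is to secure one reduced model on which some iterate of $\f$ behaves well, using that the indeterminacy loci of the $\f^{n}$ eventually stabilise.

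\emph{Next}, I claim $\kod(\cF)\le 1$. Were $\cF$ of general type, McQuillan's theorem would provide a canonical model $\cF_{\mathrm{can}}$ with $K_{\cF_{\mathrm{can}}}$ ample, to which $\f$ descends as a finite self-map $\bar\f$; the comparison of canonical bundles, the Hodge index inequality for $\bar\f^{*}K_{\cF_{\mathrm{can}}}$ and $K_{\cF_{\mathrm{can}}}$, and iteration of $\bar\f$ then contradict $K_{\cF_{\mathrm{can}}}^{2}>0$. Hence $\kod(\cF)\le 1$, and by the classification of such foliations (see~\cite{Br2,McQ,mendes}) --- discarding those with a rational first integral (all the fibration-type ones) and the finitely many ``exceptional'' families, which do not carry non-invertible dominant self-maps --- up to birational conjugacy $\cF$ is a linear foliation on an abelian surface, a Riccati foliation over a curve $B$, or a turbulent foliation over a curve $B$. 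In the last two cases $\f$ maps the fibres of the transverse fibration to fibres of the same kind, hence preserves it and induces a dominant $\bar\f\colon B\to B$ which one checks must be non-invertible, so $B\cong\PP^{1}$ or $B$ is an elliptic curve.

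\emph{Finally}, in each surviving case I would produce, after one more birational change, a $\f$-invariant Zariski dense open $\cU=\C^{2}/\Gamma$ with $\Gamma\subset\mathrm{Aff}(\C^{2})$ acting discontinuously and $X$ an equivariant compactification, such that $\cF$ lifts to the foliation $dT=0$ on $\C^{2}$ for an entire developing map $T\colon\C^{2}\to\C$ transforming affinely under $\Gamma$ and under the lift of $\f$. The existence of this structure is where the non-invertibility of $\f$ really enters: combining the intertwining relations for $\f$ with the Riemann--Hurwitz constraints forced by $\bar\f$ being non-invertible (the abelian surface case being immediate after a linear change of coordinates) pins down the holonomy representation, while the absence of a rational first integral guarantees that $\Gamma$ is large enough. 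The last point is a rigidity statement for the developing map: an entire $T$ on $\C^{2}$ that is affinely $\Gamma$-equivariant and for which the induced foliation is algebraic on the projective compactification with no rational first integral must be, up to affine changes of coordinates in source and target, either linear or of the form $y+e^{x}$ --- a second exponential twist would make the induced foliation non-algebraic. This yields the two normal forms $dx$ and $d(y+e^{x})$ together with the asserted affine lift of $\f$. The main obstacle is precisely this last step --- the reduction to the transversely affine $\C^{2}/\Gamma$ picture and the rigidity of $T$ --- with the turbulent case and the Riccati case over an elliptic base the most delicate, since there the transverse group is a priori a lattice-type, not cyclic, subgroup of $\mathrm{Aff}(\C)$.
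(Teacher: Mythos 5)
Your overall architecture matches the paper's: reduce to a minimal/nef model, use non-invertibility to kill general type, classify the survivors, and then produce the affine transverse structure. But there are two genuine gaps and one error of fact.

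\textbf{The Hilbert modular case is not a finite list of ``exceptional families'' that can be waved away.} When $T^{*}\cF$ is pseudo-effective and $\nu(\cF)=1$ but $\kod(\cF)=-\infty$, the foliation is Hilbert modular, and these form a whole class (one for each irreducible lattice in $\PSL(2,\R)\times\PSL(2,\R)$). Ruling out non-invertible self-maps here is one of the substantive points of the paper (Proposition~\ref{P:modular}): the paper gives two arguments, one using the intersection form on the plane spanned by $T^{*}\cF$ and $T^{*}\cG$ in $\NS(\bar X)$, the other lifting $\f$ to an unramified map of $\HH^{2}$ and invoking Borel density for $\Gamma$. Your proposal replaces this by the unsupported assertion that the exceptional families ``do not carry non-invertible dominant self-maps.'' As written, this is a missing step.

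\textbf{The induced map on the base is invertible, not non-invertible.} In the Riccati and turbulent cases the paper (Lemma~\ref{l:ricandturb}) shows that $\f$ preserves the transverse (Iitaka) fibration $\pi:X\to\PP^{1}$ and induces an \emph{automorphism} $f\in\aut(\PP^{1})$ that is \emph{not periodic}: $f(x)=\lambda x$ with $\lambda$ not a root of unity, or $f(x)=x+1$. The non-invertibility of $\f$ sits entirely along the fibres (the exponent $k\ge 2$). Your claim that ``one checks [$\bar\f$] must be non-invertible'' is false, and the subsequent deduction that $B$ is rational or elliptic happens to be correct only because a non-periodic automorphism already forces $g(B)\le 1$. (Relatedly, the paper shows the base is always $\PP^{1}$, so the worry about a Riccati foliation over an elliptic base does not arise.)

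Beyond these, two remarks. First, on the step eliminating general type: the paper proves the sharper statement $(T^{*}\cF)^{2}=0$ on a nef model (Proposition~\ref{P:key2}) via $(T^{*}\cF)^{2}\ge(\f^{*}T^{*}\cF)^{2}\ge e(\f)(T^{*}\cF)^{2}$; this gives $\nu(\cF)\le 1$ directly and also the crucial equality $\f^{*}T^{*}\cF=T^{*}\cF$, which drives everything downstream (total invariance of the $\cF$-invariant curves, $\f$-preparedness). Your Hodge-index-on-the-canonical-model argument would also give $\nu\le1$, but it does not yield $\f^{*}T^{*}\cF=T^{*}\cF$, which you will need later. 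Second, the final ``rigidity of the developing map'' step is where the paper does most of its actual work (Theorems~\ref{T:clas-kod0},~\ref{T:clas-kod1}, Lemma~\ref{L:lift}): one needs the $\f$-prepared open set, the lift of $\f$ to the cyclic cover in the $\kod=0$ case, the explicit normal forms for $\omega$ and $\f$ in the $\kod=1$ case (solving functional equations like $x^{n+1}(kp(x)-\lambda^{n+1}p(\lambda x))=m$), and only then the explicit coordinate change $\varphi(x,y)=(\exp(x/(n+1)),\dots)$ that produces $d(y+e^{x})$. Stating a general rigidity principle for the developing map $T$ does not substitute for this; indeed, proving such a rigidity statement directly would essentially require reconstructing the normal-form analysis.
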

In the case $\f$ is  birational, a similar statement holds whenever the degrees of $\f$ grow exponentially, see~\cite[Corollary 1.3]{CF}.

Note that this implies $\cF$ is non singular (in the orbifold sense)  on the open set $\cU$, and has a Liouvillian first integral as in the case of
foliations invariant by birational maps, see~\cite[Corollary 7.4]{CF}. We shall call any rational map $\f$  which lifts to an affine map  in $\C^2$ like in the theorem \emph{Latt\`es-like}. This class already appears in the classification of endomorphisms of $\PP^k$, $k\ge 1$ having a non-trivial centralizer by Dinh and Sibony~\cite{Dinh-Sibony}.

Let us comment briefly on which surfaces $\cU$ may appear.
When $\Gamma$ is a subgroup of $(\C^2,+)$ of rank $r \in \{ 1,2,3,4\}$, then $\cU$ is either
an abelian surface ($r=4$); a locally trivial fibration over $\C^*$ with fiber some elliptic curve ($r=
3$); $\C\times E$ for some elliptic curve, or $\C^* \times \C^*$ ($r=2$); or $\C\times \C^*$ ($r=1$). In particular any projective\footnote{Otherwise more examples arise like non-k\"ahlerian Kodaira surfaces.}  compactification of $\cU$ is birational to a torus, to $\PP^1 \times E$ for some elliptic curve $E$, or to $\PP^1 \times \PP^1$.  Note that each of these surfaces admit Latt\`es-like maps.
In general, when $\Gamma$ is no longer a subgroup of  $(\C^2,+)$, one obtains finite quotients of these examples.

In fact, there are strong restrictions for a Latt\`es-like map on a surface $X$ to preserve a foliation in $\C^2$ which induces a holomorphic singular $\f$-invariant foliation on $X$. Our results are actually more precise, and we give normal forms for both the foliation and the map, see Theorems~\ref{T:clas-kod0},~\ref{T:clas-kod1} below in the case $\cF$ has no first integral and Theorem~\ref{T:clas-fibr} when $\cF$ is tangent to a pencil of curves.
As a consequence of the classification, we obtain the following result which extends~\cite[Corollaire~1.3]{CF} to non-invertible maps.
\begin{CorB}
Suppose $\f: X \dto X$ is a  dominant non-invertible rational surface map which preserves a holomorphic foliation $\cF$ which is not tangent to a rational or an elliptic fibration. Then $\f$ preserves at least two foliations.
\end{CorB}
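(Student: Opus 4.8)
The plan is to run the pair $(\cF,\f)$ through Theorem~A, in the sharp form given below by Theorems~\ref{T:clas-kod0},~\ref{T:clas-kod1} and~\ref{T:clas-fibr}, and to exhibit a second invariant foliation in each of the resulting normal forms. One first splits according to whether $\cF$ has a rational first integral.

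Suppose it does, so that $\cF$ is tangent to a pencil of curves; by hypothesis this pencil is neither rational nor elliptic, hence all its generic members share a common genus $g\ge 2$. The map $\f$ sends members to members, inducing a map $\psi\colon B\to B$ on the base, and its restriction to a generic member is a nonconstant morphism between two smooth curves of genus $g$, hence an isomorphism by Riemann--Hurwitz. Thus $\f$ would be birational if $\psi$ were, so $\deg\psi\ge 2$; iterating, a generic member is then isomorphic to infinitely many others, and since a nonisotrivial family of curves of genus $\ge 2$ over a curve has only finitely many fibres in each isomorphism class, the pencil is isotrivial. After a finite base change $X$ becomes birational to a product $F\times C$ on which $\f$ acts as a product map --- the induced family of automorphisms of $F$ is constant because $\aut(F)$ is finite --- and the two rulings of $F\times C$ descend to two distinct $\f$-invariant foliations of $X$. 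This recovers Theorem~\ref{T:clas-fibr}.

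Now suppose $\cF$ has no rational first integral and apply Theorem~A: up to birational conjugacy there is a $\f$-invariant Zariski open dense $\cU=\C^2/\Gamma\subset X$ on which $\f$ lifts to an affine map $\tilde\f$, and $\cF$ is defined on $\C^2$ either by $dx$ or by $d(y+e^x)$. If $\cF=\{d(y+e^x)=0\}$, a direct computation shows that an affine map preserving this foliation must have the form $(x,y)\mapsto(x+a,\,e^a y+b)$; applied to $\tilde\f$ and to every element of $\Gamma$ this shows that $\{dx=0\}$ is $\Gamma$-invariant, hence descends to a foliation of $\cU$ which extends to a (possibly singular) foliation of $X$, distinct from $\cF$ and invariant under $\f$. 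If instead $\cF=\{dx=0\}$, the same computation shows that $\tilde\f$ and all of $\Gamma$ have lower triangular linear part; a second $\f$-invariant foliation is then produced by a common second eigendirection of these linear parts when one exists, by the pencil of lines through a common fixed point when the linear part is a nontrivial Jordan block, and by the second ruling when $X$ is birational to $\PP^1\times\PP^1$. Going through the finitely many normal forms of Theorems~\ref{T:clas-kod0} and~\ref{T:clas-kod1} one finds such a foliation always exists, the only exceptions being the configurations in which $\cF$ itself is tangent to a rational or an elliptic fibration --- for instance when $\Gamma$ has rank $4$ and the leaves of $\{dx=0\}$ are elliptic curves --- which are excluded by hypothesis.

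The delicate point is this last case analysis for $\cF=\{dx=0\}$: one must check that the candidate second foliation is simultaneously invariant under $\tilde\f$, invariant under $\Gamma$ (so that it descends to $\cU$) and extends across $X\setminus\cU$, and one must verify that whenever every candidate fails the foliation $\cF$ is indeed tangent to a rational or an elliptic fibration. This is what forces one to use the explicit normal forms for $X$, $\Gamma$, $\cF$ and $\f$ supplied by Theorems~\ref{T:clas-kod0}--\ref{T:clas-kod1}, rather than Theorem~A in isolation.
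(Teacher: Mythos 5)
Your overall decomposition --- first integral vs.\ none, then $d(y+e^x)$ vs.\ $dx$, corresponding exactly to the Kodaira dimension $1$ vs.\ $0$ dichotomy --- is the same one the paper uses, and your treatment of the Riccati/turbulent case via the normal form $d(y+e^x)$ is clean and correct: the computation that any affine map preserving $d(y+e^x)$ is of the form $(x,y)\mapsto(x+a,e^ay+b)$ does hand you $\{dx=0\}$ as a $\Gamma$- and $\tilde\f$-invariant second foliation, and this is a slightly more unified argument than the paper's separate treatment of $\kappa_1(1)$ and $\kappa_1(2)$.

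However, the $\kod(\cF)=0$ branch, i.e.\ $\cF=\{dx=0\}$, is where your proposal has a genuine gap, and you acknowledge as much in your closing paragraph without filling it. The specific problem is your fall-back candidate ``the pencil of lines through a common fixed point when the linear part is a nontrivial Jordan block.'' This does not work: in every case of Theorem~\ref{T:clas-kod0} the group $\Gamma$ contains nontrivial translations (it is the extension of the cyclic $H$ by a translation lattice), and no pencil of lines through a point is preserved by a nontrivial translation, so the candidate never descends to $\cU$. More to the point, the Jordan block case simply does not occur --- but that is a statement that must be proved, and the paper proves it inside the proof of Theorem~\ref{T:clas-kod0}: in cases $\kappa_0(1)$ and $\kappa_0(5)$ the non-diagonalizability of the linear part of $\f$ would force $\cF$ to be a rational fibration, contradicting the hypothesis. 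Once one knows the linear part is diagonalizable, the argument the paper uses is that it commutes with the cyclic group $G$ of Theorem~\ref{T:clas-kod0}, so the two have a common eigenbasis, giving two invariant directions in $\PP(\C^2)$ and hence two invariant linear foliations; in the remaining cases $\kappa_0(2)$--$\kappa_0(4)$ the rational fibration $\{x=\text{cst}\}$ is the second invariant foliation. To make your proof complete you would have to replace your ``pencil of lines'' escape route with this diagonalizability argument and then actually run the five normal forms $\kappa_0(1)$--$\kappa_0(5)$ and the two normal forms $\kappa_1(1)$--$\kappa_1(2)$, checking in each that the proposed second foliation is invariant under both $\tilde\f$ and the cyclic covering group $G$ --- which is precisely what the paper does.

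A minor remark on your fibration case: the argument via Riemann--Hurwitz, isotriviality, and finiteness of $\aut(F)$ is correct in spirit, but it essentially re-derives case~(Fib1) of Theorem~\ref{T:clas-fibr}, and you should be a little careful that after the finite base change the map $\f$ indeed lifts to the new model and that the second ruling descends back to $X$. The paper avoids these points by simply invoking Theorem~\ref{T:clas-fibr} directly.
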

The assumption is necessary, since the map $(x,y) \mapsto (x^2, x y^2)$ on $\C^2$ does not preserve any other foliation than $\{x = \mathrm{cst}\}$.

Our approach also gives an alternative to the delicate analysis of the reduced fibers done by Dabija-Jonsson in~\cite{DJ}. It allows us to extend their results to arbitrary foliations. Note that the following result is not an immediate consequence of the previous results since it gives a classification up to $\mathrm{PGL}(3,\C)$. Recall that the degree of a foliation $\cF$ in $\PP^2$ is the number of tangencies between $\cF$ and a generic line of $\PP^2$.
\begin{ThmC}
Suppose $\f: \PP^2 \to \PP^2$ is a holomorphic map of degree $d\ge 2$ preserving a foliation $\cF$. Then
$\deg(\cF) = 0$ or $1$, and in appropriate homogeneous coordinates $[x:y:z]$ on $\PP^2$, one of the following cases holds:
\begin{enumerate}
 \item $\cF$ is the pencil of lines given by $d(x/y)$, and $\f= [P(x,y):Q(x,y):R(x,y,t)]$ with $P,Q,R$
homogeneous polynomials of degree $d$;
 \item $\cF$ is induced by $\, d \log ( x^{\la} y z^{-1-\la} ) $ with $\la \in \C \setminus \Q$, and $\f=
 [x^d:y^d:z^d]$;
 \item $\cF$ is induced by $\, d \log ( x^{\xi} y z^{-1-\xi} ) $ with $\xi$ a primitive $3$-rd root of unity
and $\f = [z^d:x^d:y^d]$;
 \item $\cF$ is induced by $\, d (x^p y^q / z^{p+q} )$  with $p,q \in \N^*$,
$p\neq q$, $\gcd\{ p,q \} =1$, and $\f =[x^d:y^d:R(x,y,z)]$ with $R = z^\delta \prod_{i=1}^l (z^{p+q} + c_i x^py^q)$ with $d = \delta + l(p+q)$, $c_i \in \C^*$.
 \item $\cF$ is induced by the $1$-form $d\log (xyz^{-2})$, $\f =[y^d:x^d:R(x,y,z)]$ with $R = z^\delta
\prod_{i=1}^l (z^2 + c_i xy)$ and $d = \delta + 2l$, $c_i \in \C^*$.
\end{enumerate}
\end{ThmC}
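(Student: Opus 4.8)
The plan is to treat separately the case where $\cF$ has a rational first integral and the case where it does not, and in each to upgrade the classification — which in the ambient results is only up to birational conjugacy — to a classification up to $\PGL(3,\C)=\aut(\PP^2)$. Throughout one uses that a holomorphic self-map of $\PP^2$ of degree $d\ge 2$ is automatically dominant and non-invertible, so that Theorem~A and the normal-form Theorems~\ref{T:clas-kod0},~\ref{T:clas-kod1},~\ref{T:clas-fibr} all apply to the pair $(\cF,\f)$.

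Suppose first that $\cF$ is tangent to a pencil of curves. Since $\f$ is a finite morphism, it sends a generic fiber of the pencil onto a fiber and hence induces a degree-$d$ self-map of the base $\PP^1$. Combined with the classification of fibered pairs in Theorem~\ref{T:clas-fibr}, which in turn rests on the Dabija--Jonsson analysis of reduced fibers~\cite{DJ}, this forces, up to $\PGL(3,\C)$, the pencil to be one of: the pencil of lines through a point; the pencil $\{x^py^q=t\,z^{p+q}\}$ with $p\ne q$ coprime; or the pencil $\{xy=t\,z^2\}$. In the first case $\deg\cF=0$; since $\f$ has no base points, the requirement that lines through $[0:0:1]$ be sent to lines through $[0:0:1]$ forces the first two homogeneous components of $\f$ to depend only on $x$ and $y$, giving case (1). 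In the other two cases $\deg\cF=1$; requiring $\f$ to permute the two distinguished invariant lines and to preserve the fibers pins down its first two components as $d$-th powers and its last one as $z^{\delta}\prod_i(z^{p+q}+c_ix^py^q)$, with the relation $d=\delta+\ell(p+q)$ obtained by comparing degrees. This produces cases (4) and (5).

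Suppose now that $\cF$ has no rational first integral. By Theorem~A, after a birational conjugacy there is a $\f$-invariant Zariski open set $\cU=\C^2/\Gamma$ inside a projective surface $X$ birational to $\PP^2$, hence rational. Going through the list of admissible $\cU$, rationality forces $\Gamma$ to have positive rank with $\C^2/\Gamma$ rational, and comparing the possible equivariant compactifications with $\PP^2$ — whose complement of $\cU$ must be a configuration of lines — rules out the model $\cF=d(y+e^x)$ (whose compactifications are never $\PP^2$) and leaves $\cU=\C^*\times\C^*$ (possibly up to a finite quotient), with $\cF$ the image under coordinatewise exponentiation of a linear foliation $a\,dx+b\,dy$, $b/a\notin\Q$, and $\f$ lifting to a monomial self-map; this is the content of the corresponding items of Theorems~\ref{T:clas-kod0} and~\ref{T:clas-kod1}. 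Holomorphy on $\PP^2$ then forces the three toric boundary lines to be $\cF$-invariant, so $\cF=d\log(x^{\la}yz^{-1-\la})$ with $\la\in\C\setminus\Q$ and $\deg\cF=1$; encoding a monomial endomorphism of $\PP^2$ by its $2\times 2$ integer matrix $M$, invariance of $\cF$ says that $(a,b)$ is a left eigenvector of $M$, and since $b/a=\la$ is irrational this forces $M=d\cdot\mathrm{Id}$, so $\f=[x^d:y^d:z^d]$, unless $\la^2+\la+1=0$, in which case $M$ may also be a cyclic permutation times $d\cdot\mathrm{Id}$ and $\f=[z^d:x^d:y^d]$. This gives cases (2) and (3); in particular $\deg\cF\in\{0,1\}$ in every case.

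\textbf{Main obstacle.} The crux is the passage from birational to projective conjugacy in the second case. Theorem~A only provides $\cU=\C^2/\Gamma$ inside \emph{some} rational model, so one has to (i) determine exactly when that model can be taken to be $\PP^2$ and (ii) check that the intervening birational map can then be chosen in $\PGL(3,\C)$. Concretely this amounts to controlling the boundary $X\setminus\cU$: when $X=\PP^2$ it must be the toric triangle of three lines, and the compatibility of $\Gamma$, of the monomial lift of $\f$, and of this triangle is what rigidifies $\Gamma$ to $(2\pi i\Z)^2$ — possibly extended by a cyclic group of order $3$ — and $\f$ to the listed normal forms; in particular it is here that the skew-product maps arising from $d(y+e^x)$ get excluded, since they do not extend holomorphically to $\PP^2$. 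A secondary point, already present in the fibered case, is to ensure that no other pencil of $\PP^2$ (for instance a pencil of conics, or an elliptic/Halphen pencil) can support a non-invertible endomorphism; this is exactly where the reduced-fiber analysis behind Theorem~\ref{T:clas-fibr} and~\cite{DJ} enters.
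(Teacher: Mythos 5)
Your proposal and the paper's proof of Theorem~C take genuinely different routes, and yours has a gap at exactly the point you flag as ``the crux.''

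The paper's argument is direct and never passes through the birational classification. It first applies Proposition~\ref{P:var} to a holomorphic self-map of $\PP^2$: since such a map contracts no curve, the divisor $D=T^*\cF-\f^*T^*\cF$ is effective, which together with $\deg(\f^*T^*\cF)=d(\deg\cF-1)$ forces $\deg\cF\le1$ (this is Proposition~\ref{C:DJ}). The remaining work is then done entirely on $\PP^2$: degree~$0$ is the pencil of lines; degree~$1$ foliations are classically (Jouanolou) either of type (a) $d\log\bigl(\frac{x}{y}\exp(\frac{z}{y})\bigr)$ or of type (b) $d\log(x^{\la}yz^{-1-\la})$, and one uses that critical components of $\f$ are $\cF$-invariant (again Proposition~\ref{P:var}) to force the small set of $\cF$-invariant lines to be totally invariant, which pins down $\f$ monomially. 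Type (a) is then excluded by an explicit computation, and type (b) splits into $\la\notin\Q$ (cases 2, 3) and $\la\in\Q$ (cases 4, 5). This yields a $\PGL(3,\C)$-classification with no birational manipulation at all.

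Your route instead tries to deduce Theorem~C from Theorem~A and the normal-form theorems. The paper itself warns against this: ``Note that the following result is not an immediate consequence of the previous results since it gives a classification up to $\PGL(3,\C)$.'' You correctly identify the passage from birational conjugacy to projective conjugacy as the main obstacle, but you do not actually carry it out. Concretely, two points are asserted without proof. First, in the no-first-integral case you dismiss the $d(y+e^x)$ branch of Theorem~A by saying its ``compactifications are never $\PP^2$.'' This is not a priori clear and, as stated, misleading: there \emph{is} a degree-$1$ foliation on $\PP^2$ with exponential-type leaves, namely Jouanolou's type~(a) foliation $d\log\bigl(\frac{x}{y}\exp(\frac{z}{y})\bigr)$ (with $\cU=\C^*\times\C$); and the Riccati form $\frac{dy}{y}+\frac{m\,dx}{(k-1)x}+\mu x^n dx$ of Theorem~\ref{T:clas-kod1} does extend to a foliation on $\PP^2\supset\C^*\times\C^*$ --- just one of degree $>1$. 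Excluding it requires precisely the bound $\deg\cF\le1$, which in your scheme only appears as a consequence at the very end, so the logic is circular. Second, your treatment of the pencil case (yielding (1), (4), (5)) essentially defers to Dabija--Jonsson's reduced-fiber analysis; the paper presents Theorem~C explicitly as an \emph{alternative} to that analysis, so this part of your argument is a citation rather than a proof. You would need to show, from Theorem~\ref{T:clas-fibr} alone, that the only pencils of $\PP^2$ invariant under a non-invertible endomorphism are those three, which (Fib4) does not give directly.

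In short: the paper proves $\deg\cF\le1$ first and then classifies on $\PP^2$; you try to classify birationally first and deduce the degree bound last. The first order of operations closes the argument; the second, as you wrote it, does not.
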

In~\cite{FP}, we extend our classification to rational  maps preserving webs, inspired by the work of Dabija-Jonsson~\cite{DJ2} on endomorphisms preserving families
of lines.

\medskip

The plan of the paper is as follows. In Section \ref{S:reduced}, we recall basic facts about foliations and their singularities. We then describe how the cotangent bundle of a foliation behaves under the action of a rational map  in Proposition~\ref{P:var}. From this key computation, we deduce a simple proof of Theorem~C.
In Section~\ref{S:mori}, we present the basics of foliated Mori theory, following~\cite{Br}. We deduce from it two general statements (Proposition~\ref{P:key2} and~\ref{P:key3}) that are important intermediate results.
Section~\ref{S:clas} contains the classification up to finite covering  and birational conjugacy of invariant foliations without rational first integral of Kodaira dimension $0$ and $1$ (Theorem~\ref{T:clas-kod0} and~\ref{T:clas-kod1}), and of invariant fibrations (Theorem~\ref{T:clas-fibr}). The proofs of Theorem~A and Corollary~B are then given at the end of the paper.

\medskip
\noindent
{\bf Acknowledgements.} We thank Serge Cantat, Romain Dujardin and Mattias Jonsson for their comments.

\section{Transformation of the cotangent bundle}\label{S:reduced}
In this section by a surface $X$ we  mean a compact complex surface with at most quotient singularities.

\subsection{Foliations on smooth surfaces}
Let us consider first a smooth surface $X$. We denote by $TX$ its tangent bundle, by $\Om^1_X$ its sheaf of holomorphic $1$-forms and by $K_X \= \Om^2_X$ its canonical line bundle.

A (singular holomorphic) foliation $\cF$ on a surface $X$ is determined by a section with isolated zeroes $v \in H^0(X, TX \otimes T^*\cF)$
for some line bundle $T^*\cF$ on $X$ called the cotangent bundle of $\cF$. Two sections $v, v'$ define the same foliation if and only if  they differ by a global nowhere vanishing holomorphic function. The zero locus of the section $v$ is a finite set $\sing (\cF)$ called the singular locus of $\cF$. A point $x \notin \sing(\cF)$ is said to be regular.

Concretely, given an open contractible Stein cover $\{ U_i\}$ of $X$, $\cF$ is described in each $U_i$ by some vector field $v_i$ with isolated zeroes such that $v_i = g_{ij} v_j$ on $U_i \cap U_j$ for some non-vanishing holomorphic functions $g_{ij} \in \cO^*(U_i \cap U_j)$. The cocycle $\{ g_{ij}\}$ determines the cotangent bundle of $\cF$, and the collection $\{v_i\}$ induces a global section $v \in H^0(X,TX \otimes T^*\cF)$.

Integral curves of the vector fields $\{ v_i \}$ patch together to form the leaves of $\cF$.
Outside $\sing(\cF)$, a local section of $T^*\cF$ is given by a holomorphic $1$-form along the leaves.

A foliation $\cF$ can be described in a dual way by a section with isolated singularities  $\om \in H^0(X, \Om^1_X \otimes N\cF)$
for some line bundle $N\cF$ called the normal bundle of $\cF$. In a  contractible Stein open cover,
$\om$ is determined by holomorphic $1$-forms $\om_i$. The tangent spaces of leaves are then
 the kernels of $1$-forms $\om_i$.  Notice  that the collection of $1$-forms $\{ \om_i\}$ determine the same foliation as the collection of vector fields $\{ v_i \}$ if and only if  $\om_i(v_i) \equiv 0$ for all $i$.

At a regular point of $\cF$, one can contract a local section of $T\cF$ with a holomorphic $2$-form
yielding a local section of $N^*\cF$. We thus have the isomorphism:
\begin{equation}\label{E:canonico}
K_X = N^* \cF \otimes T^* \cF
\end{equation}

Suppose $\f : Y \to X$ is a dominant holomorphic map between two smooth surfaces $X, Y$, and $\cF$ is a
foliation on $X$ determined by a collection of $1$-forms $\{\om_i\}$
on some open  cover $\{U_i\}$ of $X$. The holomorphic $1$-form $\f^* \om_i$ on $\f^{-1} (U_i)$ may have  in general non-isolated zeroes.
We denote by $\hat{\om}_i$ any holomorphic $1$-form on $\f^{-1} (U_i)$ with isolated zeroes which is proportional to $\f^* \om_i$.
The collection $\{ \hat{\om}_i\}$ then defines a holomorphic foliation that we call the pull-back of $\cF$ by $\f$ and denote by $\f^* \cF$.

\smallskip

A singular point $x$ for a foliation $\cF$ is said to be reduced if the foliation is locally defined by a vector field $v$ whose linear part is not nilpotent, and such that the quotient of its two eigenvalues is not a positive rational number (it may be zero or infinite). Reduced singularities satisfy the following property: for any composition of point blow-ups $\pi$ above $p$,
the invertible sheaf $T^*(\pi^* \cF) - \pi^* T^*\cF$ is determined by an \emph{effective} $\pi$-exceptional divisor. Note that in the smooth case then this difference has full support on the set of $\pi$-exceptional divisors.

A separatrix at $x$ is a germ of curve $C$ passing through $x$ such that $C\setminus \{ x\}$ is a leaf of $\cF$. The singularity of $\cF$ at $x$ is said to be dicritical if there exist infinitely many separatrices at this point. A reduced singularity is not dicritical.

It is a theorem of Seidenberg that for any foliation $\cF$ there exists a composition of point blow-ups $\pi: Y \to X$ such that $\pi^* \cF$ is reduced.
A reduced foliation on a surface is a foliation with all its singularities reduced. If $\cF$ is a reduced foliation on $X$, and $\pi : Y \to X$ is  a composition of point blow-ups then $\pi^* \cF$ is again reduced.

\subsection{Foliations on  surfaces with quotient singularities}

We shall also work with surfaces with quotient singularities. A foliation on such a surface is
a foliation on its smooth part.
Suppose $(X,p)$ is a quotient singularity locally isomorphic to $\C^2/G$ where $G$ is a finite subgroup of $\mathrm{GL}(2,\C)$. A foliation $\cF$ on $(X,p)$ is the image of a foliation on $\C^2$ which is $G$-invariant.
A foliation on $(X,p)$ is said to have a reduced singularity (resp. to be smooth) at $p$ iff its lift to $\C^2$ has reduced singularities (resp. is smooth).

On a surface $X$ with at most quotient singularities, there exists a natural definition of intersection of divisors. Note that the intersection product of two curves needs not be an integer but is instead a rational number.  One then defines the (rational) Neron-Severi group of  $X$, $\NS(X)$, as  the group of $\mathbb Q$-divisors of $X$ modulo numerical equivalence.

Since $X$ is singular, $T^*\cF$ is not necessarily a line-bundle. It is only a torsion-free sheaf locally free outside the singular set of $X$.
However a suitable power of $T^* \cF$ is  a line-bundle, and consequently determines a class in $\NS(X)$ which will also be denoted by $T^* \cF$.
For any birational morphims $\pi: X' \to (X,p)$, one can then define
$\pi^* T^*\cF \in \NS(X')$. If $\cF$ has a reduced singularity at $p$ and $X'$ has only
quotient singularities, then $T^*(\pi^*\cF) - \pi^* (T^*\cF)$ is determined by an effective $\pi$-exceptional divisor just as in the smooth case (note however that the coefficients of this divisor may be non-integral rational numbers). Again this difference has full support on the set of $\pi$-exceptional divisors if $\cF$ is smooth at $p$.

\subsection{Rational maps}

Suppose $\pi : X' \to X$ is a birational morphism and $\cF'$ is a foliation on $X'$.
Then we can push-forward $\cF'$ outside the exceptional components of $\pi$. This defines a foliation
on the complement of a finite set in $X$. By Hartog's theorem, this foliation extends to $X$ in a unique way: we denote it by $\pi_* \cF'$.

If $\f: Y \dto X$ is a dominant meromorphic map and $\cF$ is a foliation on $X$ then $\f^* \cF$ is defined as follows.
Let $\Gamma$ be a desingularization of the graph of $\f$. Thus there are two holomorphic maps
$\pi: \Gamma \to Y$ and $f: \Gamma \to X$ such that $\pi$ is a composition of point blow-ups and $f = \f \circ \pi$ as
as indicated in the diagram below. By definition, $\f^* \cF$ is the foliation $\pi_* f^* \cF$ on $X$.
\[
 \xymatrix{
 &\Gamma \ar[dl]_{\pi} \ar[dr]^f &
  \\
 X  \ar@{-->}[rr]^{\f} && Y}
\]
In the sequel, a foliated surface $(X,\cF)$ is a compact complex surface with at most quotient singularities endowed with a singular holomorphic foliation. A map $\phi : (Y,\cG) \dto (X,\cF)$ of foliated surfaces is a dominant meromorphic map $\f : Y \dto X$ such that  $ \cG = \f^* \cF$.

\smallskip

Let us introduce a little more notation.
We let $\ind(\f)$ be the set of indeterminacy points of $\f$. In terms of the diagram above, it is the finite set of points $p \in X$ such that $f \pi^{-1} (p)$ has positive dimension.
For any Cartier $\Q$-divisor $D$ in $X$, we set $\f^* D \= \pi_* f^* D$. It is a linear map which preserves effectivity.

Then $\f$ induces maps between the Neron-Severi groups of $X$ and $Y$ that we again denote by  $\f^* : \NS(X) \to \NS(Y)$.

\subsection{Pull-back of foliations}

Denote by $\Delta_\f$ the divisor determined by the vanishing of the Jacobian determinant of $\f$. It is locally defined by the vanishing of $\det D\f$, where $D\f$ denotes the differential of $\f$. Its support is the critical set of $\f$, and it satisfies the equation
\begin{equation}\label{E:jacob}
 K_X = \f^* K_Y + \Delta_\f~.
\end{equation}

The following result is our key technical tool.
When $E$ is an irreducible curve, and $Z$ is any divisor we denote by $\ord_E(Z)\in \Z$ the order of vanishing of $Z$ at a generic point of $E$.

\begin{prop}\label{P:var}
Suppose $\f: (Y, \cG) \dto (X,\cF) $ is a dominant meromorphic map
between foliated surfaces with at most quotient singularities.  Then one has  $\f^* T^*\cF =  T^*\cG - D$ in $\NS(Y)$
for some (non necessarily effective) divisor with  support  included in the critical set of $\f$ and satisfying
$D \le \Delta_\f$.

Pick any critical component $E$, and when it is contracted to a point assume that
$\f(E)$ is a reduced singularity of $\cF$.
\begin{enumerate}
\item If $E$ is generically transverse to $\cG$ then $\ord_E(D) = \ord_E(\Delta_\f)>0$;
\item If $E$ is $\cG$-invariant, then $\ord_E(D) \ge 0$.
\end{enumerate}
In particular, if $\cF$ has only reduced singularities then $D$ is effective.
\end{prop}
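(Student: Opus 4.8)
The plan is to reduce to the case of a holomorphic map, read off the class identity from the canonical and normal bundle formulas, and then prove the local assertions (1)--(2) by a computation in adapted coordinates; the genuinely delicate case will be when $E$ is contracted to a reduced singularity. \emph{Reduction to a holomorphic map.} Resolving the graph of $\f$ gives a birational morphism $p\colon\Gamma\to Y$ (a composition of point blow-ups) and a holomorphic map $g\colon\Gamma\to X$ with $g=\f\circ p$, and $p^*\cG=g^*\cF$ since $\cG=p_*(g^*\cF)$ and $p$ is birational. Because $p_*$ annihilates $p$-exceptional divisors, $\Delta_\f=p_*\Delta_g$, $\f^*T^*\cF=p_*g^*T^*\cF$ and $p_*T^*(g^*\cF)=T^*\cG$ (the last because $T^*(p^*\cG)-p^*T^*\cG$ is $p$-exceptional); moreover $\ord_E$ of a non-$p$-exceptional prime divisor is preserved by $p_*$, a curve in $\Gamma$ is contracted by $g$ exactly when its image in $Y$ is contracted by $\f$, onto the same point, and transversality or invariance of $E$ is detected at its generic point, hence is unaffected. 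So it suffices to prove the proposition for $g$, which I rename $\f\colon Y\to X$.

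\emph{The class identity.} Present $\cF$ by $1$-forms $\om_i$ on a cover $\{U_i\}$ with $\om_i=h_{ij}\om_j$. The forms $\f^*\om_i$ are glued by the cocycle $\{h_{ij}\circ\f\}$ of $\f^*N\cF$ and have a common divisorial zero $Z\ge0$; since the $\om_i$ have isolated zeros, $Z$ is supported where $\f$ fails to be a local isomorphism, i.e.\ on the critical set of $\f$. Dividing out $Z$ gives forms with isolated zeros defining $\cG=\f^*\cF$, so $N^*\cG=\f^*N^*\cF+Z$; combining with \eqref{E:canonico} on $X$ and on $Y$ and with the ramification formula $K_Y=\f^*K_X+\Delta_\f$,
\[
T^*\cG=K_Y-N^*\cG=\bigl(\f^*K_X+\Delta_\f\bigr)-\bigl(\f^*N^*\cF+Z\bigr)=\f^*T^*\cF+\Delta_\f-Z ,
\]
so $\f^*T^*\cF=T^*\cG-D$ with $D\=\Delta_\f-Z\le\Delta_\f$, supported on the critical set of $\f$.

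\emph{Local analysis: the direct cases.} Fix a critical component $E$. If $\f$ contracts $E$ to $x_0=\f(E)$, take a separatrix $S$ of $\cF$ through $x_0$ when $x_0$ is singular, and the leaf through $x_0$ otherwise: then $E\subseteq\f^{-1}(x_0)\subseteq\f^{-1}(S)$, a $\cG$-invariant curve, so $E$ is $\cG$-invariant. If $E$ is not contracted and $E'\=\f(E)$, then $E'$ is $\cF$-invariant iff $E$ is $\cG$-invariant, the $\cF$-leaf through a generic point of $E'$ pulling back to a $\cG$-invariant curve through $E$ and conversely. Work near a generic point of $E$ with $E=\{y_1=0\}$ and $x_j\circ\f=y_1^{e_j}P_j$, $P_j|_E\not\equiv0$. (i) If $E$ is transverse to $\cG$, then $E$ is not contracted and $E'$ is transverse to $\cF$; taking $\om=dx_2$ for a local first integral $x_2$ of $\cF$, $\f^*\om=d(x_2\circ\f)$ does not vanish along $E$ (since $x_2\circ\f$ is non-constant on $E$), so $\ord_E(Z)=0$ and $\ord_E(D)=\ord_E(\Delta_\f)>0$: this is (1). (ii) If $E$ is $\cG$-invariant and not contracted, then $E'$ is $\cF$-invariant, and with $\om=dx_1$, $E'=\{x_1=0\}$ a leaf, both $\f^*\om=d(x_1\circ\f)$ and $\det D\f$ vanish along $E$ to order $e_1-1$, so $\ord_E(D)=0$. (iii) If $E$ is contracted to a regular point, the same computation with $\om=dx_2$ gives $\ord_E(Z)=e_2-1$ and $\ord_E(\Delta_\f)\ge e_1+e_2-1$, so $\ord_E(D)\ge e_1\ge1$.

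\emph{The crucial case and the conclusion.} Suppose finally $E$ is contracted to a reduced singularity $x_0$ (so $E$ is $\cG$-invariant). Writing a local defining form as $\om=\om^{(1)}+(\text{higher order})$, reducedness gives $\om^{(1)}\neq0$, $\om^{(1)}$ not proportional to $x_2\,dx_1-x_1\,dx_2$, and --- after a linear change of coordinates, in the case of two non-zero eigenvalues --- $\om^{(1)}=\mu\,x_2\,dx_1-\la\,x_1\,dx_2$ with $\la/\mu\notin\Q_{>0}$. One checks $\ord_E(Z)\le\ord_E(\Delta_\f)$ directly: if $\f$ contracts $E$ along a non-separatrix direction, then $e_1=e_2=:e$ and $\ord_E(Z)=2e-1$, the coefficient of $dy_1$ in $\f^*\om$ having leading term a non-zero multiple of $(P_1P_2)|_E$, while $\ord_E(\Delta_\f)\ge 2e-1$; if the direction is a separatrix, the condition $\la/\mu\notin\Q_{>0}$ is precisely what excludes the cancellation $\mu e_1-\la e_2=0$ of the two leading contributions to the coefficient of $dy_1$. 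The extremal configurations (a saddle-node linear part, or very unbalanced $e_1,e_2$) are dispatched by factoring $\f=\sigma\circ\f_1$ through the blow-up $\sigma$ of $x_0$: since $x_0$ is reduced, $N^*(\sigma^*\cF)-\sigma^*N^*\cF$ is exactly the exceptional divisor $E_\sigma$, so $\Delta_\f=\f_1^*E_\sigma+\Delta_{\f_1}$ and $Z=\f_1^*E_\sigma+Z_{\f_1}$, whence $D_\f=D_{\f_1}$; one recurses, the process terminating because the contraction of $E$ is resolved after finitely many blow-ups of $x_0$. Hence $\ord_E(D)\ge0$ in every case, which is (2); combined with (iii), when $\cF$ has only reduced singularities every prime divisor occurring in $D\le\Delta_\f$ has non-negative coefficient, so $D$ is effective. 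The one genuinely delicate point is this last case: one must prevent $\f^*\om$ from vanishing along $E$ to strictly higher order than $\det D\f$, which is exactly where non-dicriticality and the arithmetic condition on the eigenvalues of the reduced singularity are used, the factorisation through blow-ups of $\f(E)$ being the clean device to handle the worst configurations.
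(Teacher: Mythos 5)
Your proof follows essentially the same route as the paper's: you set $D=\Delta_\f-Z$ where $Z$ (the paper's $D_0$) records the divisorial zeros of the pulled-back defining $1$-forms, derive the class identity from $K=N^*\cF\otimes T^*\cF$ and $K_Y=\f^*K_X+\Delta_\f$, and verify the local claims at a generic point of $E$ by writing $\f$ in adapted coordinates. For the delicate case where $E$ is contracted to a reduced singularity, both proofs factor $\f$ through a modification of the target; you iterate single point blow-ups of $x_0$ and track the identity $D_\f=D_{\f_1}$ via $N^*(\sigma^*\cF)=\sigma^*N^*\cF+E_\sigma$ and the ramification formula, whereas the paper passes directly to a single composite $\pi$ resolving the contraction and tracks $T^*(\pi^*\cF)=\pi^*T^*\cF+E'$ with $E'\ge 0$. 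These are equivalent bookkeeping; the paper's one-shot version is slightly cleaner in that it does not require the precise coefficient $k=1$ in $N^*(\sigma^*\cF)-\sigma^*N^*\cF=kE_\sigma$, only effectivity, while your recursion tacitly needs $k=1$ at each step (which is indeed true for a reduced singularity, but you do not verify it). Your preliminary reduction through a resolution $\Gamma$ of the graph is a harmless reorganization of what the paper does implicitly by working outside $\ind(\f)$. Your aside computing leading terms of $\f^*\om^{(1)}$ is not needed once the blow-up factorization is invoked, and is stated loosely (the claim $e_1=e_2$ in the ``non-separatrix direction'' case and the role of $\la/\mu\notin\Q_{>0}$ would need more care), but since you fall back to the blow-up argument this does not affect correctness.

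One point you leave unaddressed: when $x_0=\f(E)$ is a \emph{quotient singularity} of $X$, the blow-up formula $N^*(\sigma^*\cF)=\sigma^*N^*\cF+E_\sigma$ for a point blow-up no longer applies directly, since the target is not smooth there. The paper handles this by first factoring $\f$ through the minimal resolution $\pi:\hat X\to X$ of the quotient point, using $T^*\hat\cF-\pi^*T^*\cF$ effective (which relies on the orbifold definition of a reduced singularity at $x_0$), and then applying the smooth-target case. Your recursion should be preceded by this extra step; as written, the argument silently assumes $X$ is smooth at $x_0$. This is a small but genuine omission given that the proposition explicitly allows quotient singularities on $X$ and your Proposition~\ref{P:key2} needs exactly this case in the application to the minimal desingularization.
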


\begin{proof}[Proof of Proposition~\ref{P:var}]
Suppose $\cF$ is given by a collection of holomorphic $1$-forms $\{\om_i\}$ on
an open Stein cover $\{U_i\}$ of $X\setminus \mathrm{Sing}(X)$. On the open set $\f^{-1}(U_i)\setminus \ind(\f)\cup \mathrm{Sing}(Y)$, we may write
$\f^* \om_i = h_i \cdot \hat{\om}_i$ with $\hat{\om}_i$ a holomorphic $1$-form
with isolated zeroes and $h_i$ a holomorphic function whose zero set is included in the critical set of $\f$.
The divisors $\div(h_i)$ patch together and yields a global effective divisor $D_0$.
Outside $\sing(\cF)\cup \mathrm{Sing}(X)$, $\om_i$ is a local generator for the invertible sheaf $N^*\cF$ over $U_i$, and the same is true with $\hat{\om}_i$ for $N^* \cG$ over $\f^{-1}(U_i) \setminus \sing (\cG)\cup \mathrm{Sing}(Y)$.
Whence $\f^* N^*\cF = N^* \cG -D_0$. By~\eqref{E:canonico} and~\eqref{E:jacob}, we get $\f^* T^*\cF = T^*\cG + D_0 - \Delta_\f$. This proves the first part of the proposition with $D = \Delta_\f -D_0$.

For the second part, we pick an irreducible component $E$ of the critical set of $\f$.
By our assumption we know that either $E$ is not contracted, or it
is contracted to a reduced singularity of $\cF$.

Suppose first $E$ is transversal to $\cG$. Then it cannot be contracted to a point since a reduced singularity admits only finitely many separatrices. At a generic point on $E$, and in suitable coordinates we may write $\f(x_1,x_2) = (x_1^a,x_2)$. On the other hand, we may suppose that $\cF$ is determined by the form $dy_2$, hence $\f^* (dy_2) =dx_2$. We conclude that $D_0 = 0$ near a generic point of $E$, and $\ord_E(D) =\ord_E(\Delta_\f)$.

Suppose next $E$ is $\cG$-invariant but is not contracted. Again we may assume $\f(x_1,x_2) = (x_1^a,x_2)$, but then  $\cF$ is determined by $dy_1$. Now $\f^* (dy_1) = a x_1^{a-1} dx_1$ and $\det D\f = a x_1^{a-1}$ so that $D_0 = (a-1) E = \Delta_\f$ at a generic point of $E$, and $\ord_E(D) =0$.

Finally suppose $E$ is $\cG$-invariant and contracted to a point $p$. Assume first $p$ is smooth on $X$.
Let $\pi : \hat{X} \to X$ be a composition of point blow-ups such that the map $\hat{\f} : Y \dto \hat{X}$
satisfying $ \pi \circ \hat{\f} = \f$ does not contract $E$. Write $\hat{\cF} \= \pi^* \cF$. As $p$ is a reduced singularity, $T^* \hat{\cF} = \pi^* T^* \cF + E'$ with $E'$ an effective divisor supported on the exceptional set of $\pi$. As $E$ is not contracted by $\hat{\f}$ we may apply our previous arguments. We may thus write
$\hat{\f}^* T^* \hat{\cF} -  T^* \cG = - \hat{D}$ for some divisor $\hat{D}$ such that
$\ord_E(\hat{D}) = 0$.
Now we get
\[\f^* T^*\cF -  T^* \cG
=
\hat{\f}^* \pi^*T^*\cF -  T^* \cG
=
\hat{\f}^* T^* \hat{\cF} -  T^* \cG - \hat{\f}^* E'
= - \hat{D} - \hat{\f}^* E' \]
We conclude that $\ord_E(D) = \ord_E(\hat{D})+ \ord_E(\hat{\f}^* E') = \ord_E(\hat{\f}^* E') \ge 0$.

Suppose now $E$ is contracted to   a quotient singularity $p$. Consider $\pi: (\hat{X}, \hat{\cF}) \to (X,\cF)$ a resolution of singularity of $X$, and let $\hat{\phi} : (Y,\cG) \dashrightarrow (\hat{X},\hat{\cF})$ be the lift of $\phi$.
Since $T^*\hat{\cF} = \pi^* T^*\cF + D'$ with $D'$ effective, we have:
$$
\phi^* T^*\cF - T^*\cG = \hat{\phi}^* \pi^* T^*\cF - T^*\cG  = \hat{\phi}^* T^*\hat{\cF} - T^*\cG - \hat{\phi}^* (D')
$$
Since $\hat{\cF}$ has reduced singularities by assumption, we may apply our former computation to $\hat{\phi}$
and we conclude that $\phi^* T^*\cF - T^*\cG = -D$ with $\ord_E(D)\ge 0$.
\end{proof}

\subsection{Proof of Theorem~C}

We actually prove a stronger result, see the proposition below.

As mentioned in the introduction $\deg(\mathcal F)$, the degree of  a foliation $\mathcal
F$ on $\mathbb P^2$, is defined as the number of tangencies between
$\mathcal F$ and a generic line $\ell$. If $\cF$ is defined by a section
with isolated zeroes $\om\in H^0(\mathbb P^2,\Om^1_{\mathbb P^2} \otimes N\cF)$ and
$N\cF = \cO_{\mathbb P^2}(k)$ for some $k$, then the restriction $\om|_\ell$
is a section of $H^0(\mathbb P^1, \Omega^1_{\mathbb P^1} \otimes \cO_{\mathbb P^1}(k) )$.
By definition the number of zeroes of $\om|_\ell$ is equal to $\deg(\cF)$ so that
$$N \mathcal F = \mathcal
O_{\mathbb P^2}(\deg(\mathcal F)+2) \quad \text{ and } \quad
T^*\mathcal F = \mathcal O_{\mathbb P^2}(\deg(\mathcal F)-1).$$
Recall that the algebraic degree of a rational map  $\f: \mathbb P^2 \dto \mathbb
P^2$ is by definition the degree of  $\f^{-1} \ell$ for a generic line $\ell$.
If we apply Proposition \ref{P:var} to reduced foliations on
$\mathbb P^2$ then we obtain the following result.
\begin{prop}\label{C:DJ}
Let $\f: (\mathbb P^2, \mathcal F) \dashrightarrow(\mathbb
P^2,\mathcal F)$ be a dominant rational map of algebraic degree  $d
> 1$. Suppose $\cF$ is reduced or $\f$ does not contract any curve.
Then $\deg(\cF) \le 1$.
\end{prop}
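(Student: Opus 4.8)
The plan is to compare the classes $\f^*T^*\cF$ and $T^*\cF$ in $\NS(\PP^2)=\Q$ via Proposition~\ref{P:var}, using the hypothesis $\cG=\f^*\cF=\cF$ so that both cotangent bundles are the same line bundle $\cO_{\PP^2}(k)$ with $k=\deg(\cF)-1$. On the Picard group of $\PP^2$, the pull-back $\f^*$ is multiplication by the algebraic degree $d$, so $\f^*T^*\cF = d\,(\deg(\cF)-1)\,H$ where $H$ is the hyperplane class. Proposition~\ref{P:var} then gives
\[
d\,(\deg(\cF)-1)\,H = (\deg(\cF)-1)\,H - D
\]
in $\NS(\PP^2)$, hence $D = (d-1)(\deg(\cF)-1)\,H$ as a class. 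Under either of the two standing hypotheses (either $\cF$ is reduced, or $\f$ contracts no curve), Proposition~\ref{P:var} tells us that $D$ is effective: in the first case this is the final assertion of the proposition, and in the second case the critical set of $\f$ contains no contracted component, so every critical component $E$ is not contracted to a point and cases (1)--(2) of the proposition apply directly, each giving $\ord_E(D)\ge 0$.

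The next step is to extract a numerical inequality. Since $D$ is an effective $\Q$-divisor on $\PP^2$ whose class is $(d-1)(\deg(\cF)-1)\,H$, and since effectivity on $\PP^2$ forces the $H$-coefficient to be $\ge 0$, we immediately get $(d-1)(\deg(\cF)-1)\ge 0$, i.e. $\deg(\cF)\ge 1$ (using $d>1$) — which is the wrong direction. So the crux is not effectivity of $D$ alone but an \emph{upper} bound on its degree. Here I would use $D\le\Delta_\f$ from Proposition~\ref{P:var} together with the Jacobian formula~\eqref{E:jacob}: since $K_{\PP^2}=\f^*K_{\PP^2}+\Delta_\f$ and $K_{\PP^2}=\cO_{\PP^2}(-3)$, one finds $\Delta_\f$ has class $3(d-1)\,H$. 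Combining $0\le D\le\Delta_\f$ in the sense of classes on $\PP^2$ (both are effective, and $\Delta_\f-D$ is effective by the proposition), the $H$-coefficients satisfy
\[
0 \le (d-1)(\deg(\cF)-1) \le 3(d-1),
\]
so $\deg(\cF)-1\le 3$, i.e. $\deg(\cF)\le 4$. This is not yet the claimed bound $\deg(\cF)\le 1$, so a sharper argument is needed.

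To sharpen, I would look more carefully at the contribution of critical components to $D$ versus $\Delta_\f$. The point is that case~(2) of Proposition~\ref{P:var} (a $\cG$-invariant non-contracted critical component $E$) contributes $\ord_E(D)=0$ while it may contribute positively to $\Delta_\f$; only critical components that are \emph{generically transverse} to $\cF$ contribute their full multiplicity to $D$. But a generic line $\ell$ meets $\cF$ transversely and meets the transverse part of the critical set in a controlled way; the number of tangencies along $\ell$ is exactly $\deg(\cF)$, and one can bound the transverse critical degree in terms of $\deg(\cF)$ and $d$. Concretely: let $\Delta_\f = \Delta_\f^{\mathrm{inv}} + \Delta_\f^{\mathrm{tr}}$ split the critical divisor into its $\cF$-invariant and $\cF$-transverse parts. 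Then $D \le \Delta_\f^{\mathrm{tr}}$ (since on $\Delta_\f^{\mathrm{inv}}$, $D$ has order $0$ by case (2), assuming reducedness or no contraction), whence
\[
(d-1)(\deg(\cF)-1) \;\le\; \deg\big(\Delta_\f^{\mathrm{tr}}\big).
\]
On the other side, a transverse critical curve is tangent to $\cF$ at every point where it is not contracted — but a generic line's intersection with the tangency locus has degree $\deg(\cF)$, which bounds $\Delta_\f^{\mathrm{tr}}$ in a way that grows only linearly in $\deg(\cF)$ and $d$. I expect the sharp bookkeeping to yield $\deg(\Delta_\f^{\mathrm{tr}}) \le (d-1)\cdot 2$ or similar, forcing $\deg(\cF)-1\le 2$, and then a case analysis on the invariant part (or an Euler-characteristic/index argument using that $\f$ preserves the tangency divisor of $\cF$ with a line) rules out $\deg(\cF)=2,3$.

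\textbf{Main obstacle.} The delicate point is the precise estimate of $\Delta_\f^{\mathrm{tr}}$, the transverse part of the critical divisor, in terms of $\deg(\cF)$. The naive class inequality only gives $\deg(\cF)\le 4$; squeezing this down to $\le 1$ requires genuinely using that the \emph{same} foliation $\cF$ is preserved — equivalently, that the tangency divisor $\mathrm{tang}(\cF,\ell)$ of $\cF$ with a generic line and its $\f$-pullback are comparable — rather than merely that \emph{some} foliation is preserved. I expect the authors to exploit an identity of the form ``$\f^*$ of the tangency/inflection divisor equals the tangency divisor of $\f^*\cF$ plus ramification correction,'' specialized along a generic line, to get the extra linear constraint. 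That geometric input — translating ``$\f$ preserves $\cF$'' into a numerical statement finer than the cotangent-bundle identity — is where the real work lies; the rest is linear algebra in $\NS(\PP^2)=\Q$.
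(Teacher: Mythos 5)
There is a sign error at the very first step, and it is responsible for all the difficulties you ran into afterwards. Starting from Proposition~\ref{P:var}, you correctly wrote the identity as
\[
\f^*T^*\cF \;=\; T^*\cF \;-\; D,
\]
i.e.\ $d(\deg\cF-1)H = (\deg\cF-1)H - D$. Solving this for $D$ gives
\[
D \;=\; (\deg\cF-1)H - d(\deg\cF-1)H \;=\; (1-d)(\deg\cF-1)\,H,
\]
whereas you recorded $D=(d-1)(\deg\cF-1)H$, which has the opposite sign. Under either standing hypothesis Proposition~\ref{P:var} gives that $D$ is effective (you justified this correctly), so $(1-d)(\deg\cF-1)\ge 0$; since $d>1$, this forces $\deg\cF-1\le 0$, i.e.\ $\deg\cF\le 1$. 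That is the entire proof, and it is exactly the paper's argument.

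Everything after your remark ``which is the wrong direction'' — the detour through $D\le\Delta_\f$, the bound $\deg\cF\le 4$, the proposed split $\Delta_\f=\Delta_\f^{\mathrm{inv}}+\Delta_\f^{\mathrm{tr}}$ and the attempt to control $\Delta_\f^{\mathrm{tr}}$ via the tangency divisor — is an attempt to rescue an inequality that was never actually there. With the sign fixed, no upper bound on $\deg D$ and no refined analysis of the critical set are needed: effectivity alone, combined with $d>1$, does the job.
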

\begin{proof}
One has
$\deg( \f^* T^* \cF) = d \cdot (\deg(\mathcal F) -1 )$.
Our assumption and Proposition~\ref{P:var} implies $D = T^*\cF - \f^* T^*\cF$ is effective. Thus
$ (\deg (\cF) -1) - d \cdot (\deg(\cF) -1)  \ge 0$, i.e. $\deg(\mathcal F) \le 1.$
\end{proof}
Back to the proof of Theorem~C. Suppose $\f$ is holomorphic. It cannot contract curves, hence by the previous proposition, $\deg(\cF) =0$ or $1$.
Having degree $0$ means $\cF$ is given by the fibration $\{x/y =\mathrm{cst}\}$,
which shows we are in case~(1).
When $\cF$ has degree $1$, it is defined by (a) $ d \log\left( \frac{x}{y} \exp\left( \frac{z}{y} \right)\right)$ or (b) $d \log ( x^{\la} y z^{-1 - \la} )$ with $\la \in \C^*$, see~\cite[\S 1.2]{Jo}.
We now note that critical components of $\f$
are necessarily $\cF$-invariant, as follows from Proposition~\ref{P:var}.

If we are in case (a) then there are only two $\f$-invariant algebraic curves, the lines $\{ x y=0 \}$, and their union is totally invariant
by $\f$. Hence $\f$ or its square is equal to $[x^d: y^d : R(x,y,z)]$. A simple computation shows that a holomorphic map of this form cannot
leave invariant a foliation in the type (a) above.

\smallskip

Suppose now we are in case (b).
If   $\la \notin\Q$ then $\cF$ has
only three invariant algebraic curves which  are totally invariant by $\f$. We are thus
in cases~(2) and~(3) of the theorem.
If $\la =p/q$, then the foliation is given by the fibration $\{ \frac{x^py^q}{z^{p+q}} =\mathrm{cst}\}$ which admits a unique reducible component $\{xy=0\}$. This component is necessarily totally invariant, hence
$\f = [x^d:y^d:R]$ or $[y^d:x^d:R]$ with $R$ homogeneous of degree $d$.
In the former case, since $\{xy=0\}$ is a totally invariant fiber, we may find a polynomial $P$ of degree $d$ such that  $R^{p+q}(x,y,1)= (x^py^q)^d\,P(\frac1{x^py^q})$. This implies all irreducible factors of $R(x,y,1)$ are of the form $1+c x^py^q$ with $c\in\mathbb{C}^*$ as required. In the latter case, the equation becomes
$R^{p+q}(x,y,1)= (x^qy^p)^d\,P(\frac1{x^py^q})$. Since $R(x,y,1)$ is a polynomial, this forces $p=q=1$. \qed

\section{Foliated Mori theory}\label{S:mori}

\subsection{Basics}
For the convenience of the reader, we recall the main aspects of foliated Mori theory as developed by Miyaoka, McQuillan~\cite{McQ}, Brunella~\cite{Br,Br2} and Mendes~\cite{mendes}. Its goal is to classify holomorphic foliations on projective surfaces in terms of the positivity properties of their cotangent bundles.

Recall that a class $\alpha \in \NS(X)$ is pseudo-effective if it lies in the closure
of the convex cone generated by effective divisors. It is nef if $\alpha \cdot C \ge0$ for any curve $C$.

The first important result of this theory is due to Miyaoka.
\begin{thm}[{\bf Miyaoka}]
If $(X,\mathcal F)$ is a reduced foliation on a projective surface and $T^* \mathcal F$ is not
pseudo-effective, then $\mathcal F$ is tangent to a rational fibration.
\end{thm}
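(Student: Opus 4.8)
The plan is to reduce the statement to Miyaoka's semipositivity theorem; the only hands-on part is to translate the hypothesis ``$T^*\cF$ is not pseudo-effective'' into positivity of $T\cF$ along a covering family of curves. I would first reduce to the case where $X$ is smooth. A resolution $\pi\colon \tilde X\to X$ is again a surface with (smooth) quotient singularities, and since $\cF$ is reduced we have $T^*(\pi^*\cF)=\pi^*T^*\cF+E'$ with $E'$ effective and $\pi$-exceptional; applying $\pi_*$, which preserves pseudo-effectivity and annihilates exceptional classes, shows that if $T^*\cF$ is not pseudo-effective then neither is $T^*(\pi^*\cF)$. As ``tangent to a rational fibration'' is a birational notion, I may henceforth assume $X$ smooth.

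Next I would produce a covering family of curves negative against $T^*\cF$. On a projective surface a divisor class is pseudo-effective if and only if it meets every nef class non-negatively, so $T^*\cF$ not pseudo-effective means $T^*\cF\cdot A<0$ for some ample $A$. Fixing such an $A$ and taking $m\gg 1$, the system $|mA|$ is base point free, so by Bertini a general member $H\in|mA|$ is a smooth irreducible curve; these curves sweep out $X$ and satisfy $\deg(T^*\cF|_H)=m\,(T^*\cF\cdot A)<0$. Equivalently, the rank one subsheaf $T\cF\subset TX$ has $\deg(T\cF|_H)>0$ on a general complete intersection curve cut out by a large multiple of an ample divisor.

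This is exactly the hypothesis of Miyaoka's theorem, in the foliated form of Bogomolov--McQuillan (see \cite{McQ,Br}): under it, through a very general point of $X$ passes an algebraic leaf of $\cF$ which is rationally connected, hence --- on a surface --- a rational curve. This is the genuinely deep input, and the main obstacle of the whole argument; I would invoke it as a black box, just as the paper does. For the record, its proof runs through reduction modulo $p$: for infinitely many primes the $p$-curvature of $\cF$ must vanish, since otherwise it yields a subsheaf of $TX|_H$ of degree too large to be compatible with $\deg(T\cF|_H)>0$; thus $\cF$ is $p$-closed, hence the relative tangent sheaf of the quotient by Frobenius along $\cF$, and a Mori-type bend-and-break argument, using once more $\deg(T\cF|_H)>0$, forces the leaves to be rational.

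Finally I would assemble the fibration. Since a general leaf of $\cF$ is algebraic, $\cF$ possesses infinitely many invariant algebraic curves, so by Jouanolou's theorem it admits a rational first integral $f\colon X\dto B$. Resolving the indeterminacy of $f$ and passing to the Stein factorization yields a fibration $\tilde f\colon \tilde X\to B$ with connected fibers, tangent to the strict transform of $\cF$, whose general fiber is the closure of a general leaf and hence --- by generic smoothness in characteristic zero together with the previous step --- a smooth rational curve. Thus $\cF$ is tangent to a rational fibration, and by birational invariance the same conclusion holds for the original $\cF$. I expect every step here to be routine except the appeal to Miyaoka/Bogomolov--McQuillan, which carries all the weight.
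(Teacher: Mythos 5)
The paper does not prove this theorem; it is quoted as the foundational black-box input from foliated Mori theory and attributed to Miyaoka, with the reader referred to \cite{Br,Br2,McQ,mendes} for the argument. Your sketch is therefore not really being compared against a proof in the paper, but against the standard literature proof, and it matches that proof in outline: reduce to a smooth model using that reduced singularities make $T^*(\pi^*\cF)-\pi^*T^*\cF$ effective and $\pi$-exceptional, translate ``$T^*\cF$ not pseudo-effective'' into $\deg(T\cF|_H)>0$ on a general very ample curve $H$ (correct on a surface, where the pseudo-effective cone is dual to the nef cone), invoke the Miyaoka/Bogomolov--McQuillan algebraicity-and-rational-connectedness theorem to conclude that the leaf through a very general point is a rational curve, and finally pass through Jouanolou plus Stein factorization to package the uncountably many rational leaves into a rational fibration. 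You also flag correctly that the whole weight sits on the Bogomolov--McQuillan step (algebraicity via vanishing of $p$-curvature for almost all primes and rationality via bend-and-break on the Frobenius quotient), which is exactly what the cited sources do. Two small remarks: first, after the contrapositive in the reduction step you should state explicitly that you will now apply the smooth statement to $(\tilde X,\pi^*\cF)$ and pull the conclusion back --- the fibration on $\tilde X$ descends since tangency to a rational pencil is a birational notion, as you note; second, the phrase ``very general'' is doing real work (the algebraic leaf exists away from a countable union of proper subvarieties), and the passage from that to ``infinitely many invariant algebraic curves'' for Jouanolou deserves one line, namely that a single one-dimensional leaf closure cannot contain a very general point, so the family of such leaf closures is infinite. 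With those caveats the sketch is sound and faithful to the references.
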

We may thus turn our attention to reduced foliations $(X,\cF)$ with $T^*\cF$ pseudo-effective.
The next fundamental result is due to McQuillan.
\begin{thm}[{\bf McQuillan}]\label{T:McQ}
Suppose $(X,\mathcal F)$ is a reduced foliation and $T^* \mathcal F$ is pseudo-effective.
Then one can find a regular birational map $\pi: X \to X_0$ to a projective surface $X_0$ with at most cyclic quotient singularities such that $\cF_0 \= \pi_* \cF$ is smooth at any of the singularities of $X_0$ and
 a suitable power of $T^*\cF_0$ is a line bundle which is  \emph{nef}.
\end{thm}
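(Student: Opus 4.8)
The plan is to run the foliated Minimal Model Program, i.e.\ to contract, one at a time, all the curves on which $T^*\cF$ is negative, and to check that this process terminates at a model where $T^*\cF$ becomes nef. First I would record the foliated adjunction formula: for an irreducible curve $C$ which is $\cF$-invariant and generically transverse to the singular locus, one has $T^*\cF \cdot C = 2p_a(C) - 2 - Z(\cF,C)$ where $Z(\cF,C)\ge 0$ measures the tangencies of $\cF$ along $C$ (this is Brunella's/McQuillan's index formula); in particular $T^*\cF \cdot C < 0$ forces $C$ to be rational with small self-intersection, and a careful case analysis (the heart of Mendes--Brunella) shows that any such $C$ is either a smooth rational curve with $C^2 = -1$ (a foliated $(-1)$-curve in the sense that it can be blown down keeping the foliation reduced) or one of finitely many configurations that contract to a cyclic quotient singularity at which the pushed-forward foliation is \emph{smooth}. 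The classification of these ``extremal'' curves is precisely what makes the statement work: contracting them introduces only cyclic quotient singularities and does not create new (non-reduced) singularities of the foliation.

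Next I would argue termination. Each contraction $\pi_k : X_{k-1}\to X_k$ strictly decreases the Picard number $\rho(X_{k-1}) > \rho(X_k)$ (or, in the presence of quotient singularities, a suitable rational analogue, e.g.\ the rank of $\NS(X_k)$ together with a bound on the discrepancies), so the process stops after finitely many steps at a surface $X_0$ carrying a reduced foliation $\cF_0 = \pi_*\cF$ which is smooth at every singular point of $X_0$, all of which are cyclic quotients. At that stage no curve $C$ with $T^*\cF_0 \cdot C < 0$ survives, so a suitable power $N$ of $T^*\cF_0$ (chosen so that $N\cdot T^*\cF_0$ is an honest line bundle, which is possible because $X_0$ has only quotient singularities) satisfies $(N\cdot T^*\cF_0)\cdot C \ge 0$ for every curve $C$, i.e.\ it is nef. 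Throughout I would use that pseudo-effectivity of $T^*\cF$ is preserved under these birational contractions — indeed $T^*\cF_0 = (\pi_k\cdots\pi_1)_* T^*\cF$ differs from the pullback of $T^*\cF$ by an effective exceptional divisor, by the remark on reduced singularities in Section~\ref{S:reduced} — so the MMP does not run forever by repeatedly finding negative curves on a class that stays pseudo-effective.

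The main obstacle is the classification of the contractible extremal curves and the verification that contracting them keeps the foliation reduced and the singularities cyclic quotient; this is the technical core of the Brunella--Mendes--McQuillan theory and I would quote it from \cite{Br,mendes,McQ} rather than reprove it. A secondary subtlety is bookkeeping with $\Q$-divisors and fractional discrepancies on the intermediate singular models — one must make sure the intersection-theoretic inequalities $T^*\cF\cdot C < 0$ still single out the right curves and that the termination invariant genuinely decreases; again this is handled in \cite{Br2}. Given these inputs, assembling the statement is essentially formal: iterate the contraction of negative extremal curves, invoke termination, and read off nefness of (a power of) $T^*\cF_0$ on the final model.
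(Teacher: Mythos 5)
The paper does not prove this theorem; it quotes it from McQuillan~\cite{McQ} as a black box, so there is no ``paper's own proof'' to compare with. Your MMP outline is a fair sketch of how the actual argument in~\cite{Br,mendes,McQ} goes, and you are right that the technical core is the classification of contractible configurations and the verification that contraction introduces only cyclic quotient singularities at which the pushed-forward foliation is smooth; citing that rather than reproving it is the correct attitude here.

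One genuine error, though: the foliated adjunction formula has the opposite sign on the index term. For $C$ an $\cF$-invariant compact curve and $\cF$ reduced one has
\[
T^*\cF\cdot C \;=\; 2p_a(C)-2 \;+\; Z(\cF,C), \qquad Z(\cF,C)\ge 0,
\]
not $2p_a(C)-2 - Z(\cF,C)$ as you wrote. With your sign the inequality $T^*\cF\cdot C<0$ gives $2p_a(C)-2 < Z$ and does not bound $p_a(C)$, so the claimed deduction that $C$ is rational does not follow from your displayed formula; it follows only from the correct sign, which yields $2p_a(C)-2\le -Z\le 0$ and hence $p_a(C)=0$. Also, the hypothesis ``generically transverse to the singular locus'' does not make sense since $\sing(\cF)$ is a finite set; what you want is simply that $C$ is $\cF$-invariant, with the index $Z(\cF,C)$ computed at the finitely many singular points of $\cF$ lying on $C$. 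Finally, be aware that Brunella's treatment organizes the contraction through the Zariski decomposition $T^*\cF=P_\cF+N_\cF$ and contracts the support of $N_\cF$ in Hirzebruch--Jung strings, rather than contracting one negative curve at a time; McQuillan's account is closer to the iterative phrasing you chose, so both flavours exist in the literature, but in the singular models the ``one curve at a time'' bookkeeping requires care since contractions can change $T^*\cF\cdot C$ on surviving curves.
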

For sake of convenience, we introduce the following terminology.
\begin{definition}
A nef foliation is a foliation $\cF$ on a projective surface $X$
with at most cyclic quotient singularities such that: $\cF$ is smooth at any of the singular points of $X$; it is reduced at all regular points of $X$; and a suitable power of $T^*\cF$ is a nef line bundle.
\end{definition}
The preceding two theorems can be thus rephrased as follows: either $\cF$ is tangent to a rational pencil,
or it is nef in some birational model.

\smallskip

The classification of nef foliations is done according to the values of two invariants called the Kodaira dimension and the numerical Kodaira dimension. The Kodaira dimension of a reduced foliation $(X,\mathcal F)$
is by definition the Kodaira-Iitaka dimension of its cotangent bundle, that is:
\[
  {\rm kod}(\mathcal F)= \limsup_{n \to \infty}
\frac{ \log \mathrm h^0(X, (T^* {\mathcal F})^{\otimes n}) }{\log n} \, .
\]
It is not hard to check that two reduced foliations that are birationally equivalent have the same Kodaira dimension. The numerical Kodaira dimension of a reduced foliation $(X,\mathcal F)$ is defined in terms of
the Zariski decomposition of $T^*\cF$. If $T^* \mathcal F$ is pseudo-effective then we can write $T^*\cF = P_\cF + N_\cF$ in $\NS(X)$ with $P_\cF$ a nef $\Q$-divisor and $N_\cF$ an effective $\Q$-divisor with contractible support such that $P_\cF \cdot N_\cF =0$. Such a decomposition is unique. Notice that $T^* \mathcal F = P_{\mathcal F}$ for nef foliations.  Then we set:\[
\nu(\mathcal F) = \left\{ \begin{array}{rcl}
                     -\infty & \text{ when } & T^* \mathcal F \, \, \text{  is not pseudo-effective};\\
                           0 & \text{ when } & P_{\mathcal F} = 0;  \\
                           1 & \text{ when } & P_{\mathcal F}^2 = 0 \text{ but } P_{\mathcal F} \neq 0;  \\
                           2 & \text{ when } & P_{\mathcal F} ^2>0.
                          \end{array} \, \right.
\]
Again two birationally conjugated reduced foliations have the same numerical Kodaira dimension.
The classification of foliated surfaces is summarized in  the following table:
\vskip0.2cm \centerline{\begin{tabular}{|c|c|p{8cm}|} \hline
\multicolumn{1}{|c|} %
{ { $\nu(\mathcal{F})$} } &{ {\rm{kod}($\mathcal{F}$)} } & Description\\
\hline
\hline
$-\infty$ & $-\infty$ &   Rational fibration \\
\hline
$0$ & $0$ &  $\mathcal{F}$ is the quotient of a foliation generated by a global holomorphic vector field by a finite cyclic group.\\
\hline
$1$ & $-\infty$ &   Hilbert Modular foliation  \\
\hline
$1$    & $1$    &   Riccati foliation      \\
\cline{3-3}
&       & Turbulent foliation \\
\cline{3-3}
 &      & Nonisotrivial elliptic fibration \\
\cline{3-3}
  &     & Isotrivial fibration of genus $\ge$ 2\\
\hline
$2$ & $2$ & General type \\
\hline
\end{tabular}}

\medskip
Recall that a foliation  is a Riccati (resp. turbulent) foliation, if
there exists a fibration $\pi : X \to B$ whose generic fiber is rational (resp. elliptic), and transversal
to $\mathcal F$. A foliation $(X,\mathcal F)$ is a Hilbert modular foliation if, up to birational morphisms, there exists a Zariski open subset $U$ of $X$ which
is isomorphic to the quotient space $\mathbb H^2 / \Gamma $ where $\HH$ is the upper half plane and $\Gamma$ is an irreducible lattice in the product $\mathrm{PSL}(2,\mathbb R)\times \mathrm{PSL}(2,\mathbb R)$  and the restriction of $\mathcal F$  to $U$ is the quotient by $\Gamma$ of one of the two natural fibrations $\mathbb H^2 \to \mathbb H$ .

\subsection{General properties of $\f$-invariant  nef foliations}

\begin{prop}\label{P:key2}
Suppose $(X,\cF)$ is a nef foliation, and $\f$ is a dominant non-invertible rational map preserving $\cF$.
Then, we have:
\begin{equation*}
(T^*\cF)^2 =0 \text{  and  } \f^* T^*\cF = T^* \cF \text{ in } \NS(X)~.
\end{equation*}
Moreover, the  critical set of $\f$ is invariant by $\cF$.
\end{prop}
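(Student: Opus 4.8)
The plan is to combine the general inequality from Proposition~\ref{P:var} with the fact that the relevant Néron--Severi classes are integral (or at least only differ by finitely many rational multiples of fixed classes), so that an endomorphism of a finitely generated group that can only decrease a nef class in the pseudo-effective cone must in fact fix it. First I would observe that $\cF$ being nef means $T^*\cF$ is pseudo-effective, so in particular it has a Zariski decomposition with $N_\cF = 0$; moreover $\f$ preserves $\cF$, meaning $\f^*\cF = \cF$, so $\f^* T^*\cF$ and $T^*\cF$ are both classes associated with the \emph{same} foliation. Since $\cF$ is smooth at the singularities of $X$ and reduced elsewhere, Proposition~\ref{P:var} applies (with $Y = X$, $\cG = \cF$) and yields
\begin{equation*}
\f^* T^*\cF = T^*\cF - D
\end{equation*}
in $\NS(X)$ with $D$ an \emph{effective} $\Q$-divisor supported on the critical set of $\f$.

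Next I would extract numerical consequences. Taking the self-intersection of a nef class along a pseudo-effective class gives $T^*\cF \cdot D \ge 0$ (nef dot effective), while $\f^* T^*\cF$ is again nef (pull-back of nef by a dominant map is nef), so $(\f^* T^*\cF)^2 = \deg(\f)\,(T^*\cF)^2 \ge 0$; on the other hand $(\f^* T^*\cF)^2 = (T^*\cF - D)^2 = (T^*\cF)^2 - 2\,T^*\cF \cdot D + D^2$. Since $\deg(\f) > 1$, comparing $\deg(\f)\,(T^*\cF)^2$ with $(T^*\cF)^2 - 2\,T^*\cF\cdot D + D^2$ and using $(T^*\cF)^2 \ge 0$ will force $(T^*\cF)^2 = 0$: indeed if $(T^*\cF)^2 > 0$ then applying $\f^*$ repeatedly multiplies this strictly positive number by $\deg(\f)^n \to \infty$, whereas $T^*\cF - D - \f^*D - \dots$ stays inside a fixed pseudo-effective cone, and one gets a contradiction by intersecting with an ample class (the class $T^*\cF$ would have unbounded degree). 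Once $(T^*\cF)^2 = 0$, the class $\ell := T^*\cF$ is nef, nonzero (as $\nu(\cF) \ge 1$ in all the cases where this proposition is used, but in fact if $\ell = 0$ the conclusion $\f^*\ell = \ell$ and $D = -\ell = 0$ is trivial) and isotropic. By the Hodge index theorem its orthogonal complement is negative semidefinite with kernel exactly $\R\ell$, and $\f^*\ell = \ell - D$ is also nef isotropic, hence $\ell \cdot D = 0$ and $D^2 = (\ell - \f^*\ell)^2 = 0$; so $D$ lies in the kernel, i.e.\ $D$ is numerically a (nonnegative rational) multiple of $\ell$.

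To finish I would rule out $D \ne 0$. If $D = c\,\ell$ with $c > 0$ rational, then $\f^* \ell = (1-c)\ell$, and iterating, $(\f^n)^* \ell = (1-c)^n \ell$; but $(\f^n)^* \ell = T^*(\f^{n*}\cF) + (\text{correction}) = T^*\cF + (\text{effective})$ by Proposition~\ref{P:var} again (applied to $\f^n$, which also preserves $\cF$), so $(\f^n)^*\ell - \ell$ is effective for every $n$, forcing $(1-c)^n \ge 1$ after intersecting with an ample class — impossible for $0 < c$ unless the intersection number $\ell \cdot H$ vanishes, which it does not since $\ell \ne 0$ is nef and $H$ ample. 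Hence $c = 0$, i.e.\ $D \equiv 0$, giving $\f^* T^*\cF = T^*\cF$. The main obstacle I anticipate is the bookkeeping needed to guarantee that $D$ (and the iterated corrections) are genuinely effective rather than merely ``bounded below'': this is exactly where the hypothesis that $\cF$ is a \emph{nef} foliation — smooth at the quotient singularities and reduced elsewhere — is essential, so that the last sentence of Proposition~\ref{P:var} applies and $D \le \Delta_\f$ with $D$ effective. Finally, the statement that the critical set of $\f$ is $\cF$-invariant follows because $D$, supported on the critical set, is effective and numerically zero: any critical component transverse to $\cF$ would, by part~(1) of Proposition~\ref{P:var}, contribute with strictly positive coefficient $\ord_E(\Delta_\f) > 0$ to $D$, contradicting $D \equiv 0$; hence every critical component is $\cF$-invariant.
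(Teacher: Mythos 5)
Your overall strategy is the same as the paper's: apply Proposition~\ref{P:var} to obtain $\f^* T^*\cF = T^*\cF - D$ with $D$ effective, use self-intersection bounds to force $(T^*\cF)^2 = 0$, invoke the Hodge index theorem to get $D$ proportional to $T^*\cF$, and finally rule out the proportionality constant being positive. The first two steps of your plan, modulo small repairs, do go through, but the last step as written has a sign error that inverts the intended inequality, and the alternative you reach for (iterating $\f^*$) is itself not justified for rational maps.

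Two smaller issues first. The identity $(\f^*T^*\cF)^2 = e(\f)\,(T^*\cF)^2$ is false in general for rational maps; what is true, and what the paper cites from~\cite[Cor.~3.4]{DF}, is the inequality $(\f^*T^*\cF)^2 \ge e(\f)\,(T^*\cF)^2$, which is all one needs. Similarly, $\f^* T^*\cF$ need not be nef (pull-back of a nef class by a rational map is only pseudo-effective); the deductions $\ell\cdot D = 0$ and $D^2 = 0$ can still be made using pseudo-effectivity of $\f^*\ell$ in place of nefness.

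The real gap is in the final step, where you write that ``$(\f^n)^*\ell - \ell$ is effective for every $n$, forcing $(1-c)^n \ge 1$.'' Proposition~\ref{P:var} gives the \emph{opposite} sign: $\ell - (\f^n)^*\ell = D_n$ is effective, so intersecting with an ample class yields $(\f^n)^*\ell\cdot H \le \ell\cdot H$, which is perfectly compatible with $(1-c)^n < 1$ and produces no contradiction. Moreover, $(\f^n)^* \ne (\f^*)^n$ in general for rational maps, so the formula $(\f^n)^*\ell = (1-c)^n\ell$ is itself unjustified; applying Proposition~\ref{P:var} to $\f^n$ directly only gives $(\f^n)^*\ell = (1-c_n)\ell$ with some $c_n \ge 0$, with no obvious relation between $c_n$ and $c^n$. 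The paper closes the argument differently and more cleanly: once $\f^*T^*\cF = \la\,T^*\cF$ for some $\la \ge 0$, it observes that $\f^*$ preserves the integral lattice in $\NS(X)$, so $\la$ must be an integer, and combined with $\f^*T^*\cF \le T^*\cF$ this forces $\la \in \{0,1\}$; then $\la = 0$ is excluded because $\f_*\f^* = e(\f)\,\mathrm{id}$ makes $\f^*$ injective. You should replace your iteration argument by this integrality-plus-injectivity argument. Your derivation of the $\cF$-invariance of the critical set from $D = 0$ and Proposition~\ref{P:var}(1) is correct.
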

\begin{proof}
Denote by $e(\f)$ the topological degree of $\f$: by assumption it is an integer greater or equal to $2$.
Consider $\pi: \hat{X} \to X$ the minimal desingularization of $X$.
By Theorem~\ref{T:McQ} the lift $\hat{\cF}$ of $\cF$ to $\hat{X}$ has reduced singularities everywhere. We may thus apply Proposition~\ref{P:var} to the map $\hat{\f}: (X,\cF) \dashrightarrow (\hat{X},\hat{\cF})$ induced by $\f$. Since $\pi_* T^*\hat{\cF} = T^*\cF$, we have the equality  $\f^* T^*\cF = T^*\cF - D$ in $NS(X)$ with $D$ effective.

Because $T^*\cF$ is nef, we get $(T^*\cF)^2 \ge (\f^* T^*\cF)^2$. But the latter term is always greater or equal to $e(\f) \cdot (T^*\cF)^2$, see for instance~\cite[Corollary~3.4]{DF}, hence $(T^*\cF)^2 =0$.
We also get $ 0 \le \f^* T^*\cF \cdot T^*\cF \le  (T^*\cF)^2 =0$, hence $\f^* T^*\cF = \la T^*\cF$ in $\NS(X)$ for some non negative constant $\la$ by Hodge index theorem. Since $\f$ preserves the lattice in $\NS(X)$ generated by classes associated to curves, and since $\f^* T^*\cF \le  T^*\cF$, we have $\la =1$ or $0$. But the relation $\f_* \f^* = e(\f) \mathrm{id}$ implies that $\f^*$ is injective.  We thus have $\f^* T^*\cF = T^* \cF$ in $\NS(X)$ whence $D =0$ numerically. But $D$ is effective according to final remark in Proposition \ref{P:var}, so that $D=0$ as a divisor. Hence the critical set of $\f$ must be $\mathcal F$-invariant. This concludes the proof.
\end{proof}

\begin{prop}\label{P:key3}
Suppose $(X,\cF)$ is a nef foliation, and $\f$ is a dominant non-invertible rational map preserving $\cF$. Denote by $\cC$ the closure of the $\cF$-invariant compact curves, and write $\cU \= X \setminus \cC$.

Then either $\cU$ is empty and $\cF$ is tangent to a fibration; or  $\cU$ is a Zariski-dense open subset of $X$. In the latter case, we may contract finitely many curves on $X$ such that $\cF$ remains a nef foliation and $\f$ induces a holomorphic proper unramified finite covering $\f : \cU \to \cU$. In particular, $X \setminus \cU$ is a $\f$-totally invariant proper analytic subset of $X$ which is maximal for the inclusion.
\end{prop}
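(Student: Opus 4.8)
The plan is to build on Proposition~\ref{P:key2}, which already tells us $(T^*\cF)^2 = 0$, $\f^* T^*\cF = T^*\cF$, and that the critical set of $\f$ is $\cF$-invariant; the remaining work is to analyze the geometry of $\cF$-invariant compact curves and to contract the ``bad'' ones so that $\f$ becomes a genuine covering away from them. First I would treat the dichotomy. If $\cU = X \setminus \cC$ is empty, then every point of $X$ lies on an $\cF$-invariant compact curve; since $X$ is a surface and $\cF$ has only finitely many singular points, this forces the $\cF$-invariant curves to sweep out $X$ in a one-parameter family, i.e. $\cF$ is tangent to a fibration (one should invoke the standard fact, e.g. from Brunella~\cite{Br}, that a foliation all of whose leaves through a dense set are algebraic has a rational first integral). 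Otherwise $\cC$ is a proper Zariski-closed subset, hence a finite union of compact curves plus finitely many points, and $\cU$ is Zariski-dense open.

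Next, in the case $\cU \ne \emptyset$, I would show $\cU$ can be made $\f$-invariant after contractions. The key point is that $\f$ maps $\cF$-invariant curves to $\cF$-invariant curves (because $\f$ preserves $\cF$) and, since $\f$ is dominant non-invertible, $\f^{-1}(\cC)$ is again a union of $\cF$-invariant compact curves and finitely many points, so $\f^{-1}(\cC) \subseteq \cC$ holds up to adjusting $\cC$ to its ``saturation'' under $\f^{-1}$. The finiteness of this process follows from the fact that $\f^* T^*\cF = T^*\cF$ bounds the total self-intersection / number of components that can appear: each irreducible $\cF$-invariant curve $E$ in the preimage satisfies an index-type inequality coming from $T^*\cF \cdot E \ge 0$ together with the Camacho--Sad or adjunction-type relation for reduced foliations, so only finitely many components can have negative self-intersection, and one contracts exactly those whose contraction keeps $X$ with quotient singularities and keeps $\cF$ nef (here one uses McQuillan's theorem~\ref{T:McQ} again to stay within the category of nef foliations after contracting). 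After these contractions the indeterminacy points of $\f$ all lie in $\cC$ (an indeterminacy point blows up to a curve, which must be $\cF$-invariant by Proposition~\ref{P:var} since it lies in the critical locus, hence lies in $\cC$), so $\f$ restricts to a holomorphic map $\cU \to X$; since $\f(\cC) \subseteq \cC$ and $\f^{-1}(\cC) \subseteq \cC$, in fact $\f(\cU) = \cU$ and $\f^{-1}(\cU) = \cU$.

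Then I would verify $\f: \cU \to \cU$ is proper, finite and unramified. Properness is immediate since $\cU = X \setminus \cC$ with $\cC$ the full preimage of $\cC$, so $\f$ restricted to $\cU$ is the restriction of a map of compact surfaces with $\f^{-1}(\cC) = \cC$; finiteness then follows because $\f$ on $\cU$ contracts no curve (all contracted curves are critical, hence $\cF$-invariant, hence in $\cC$). Unramifiedness is the crux: the ramification divisor $\Delta_\f$ restricted to $\cU$ must be empty. This is exactly where Proposition~\ref{P:key2} pays off a second time — the divisor $D = T^*\cF - \f^* T^*\cF$ vanishes identically, and by Proposition~\ref{P:var} any critical component $E$ of $\f$ that is $\cF$-invariant and not contracted contributes $\ord_E(D) = 0$ only if $\ord_E(\Delta_\f) = \ord_E(D_0) $ with $D_0$ the ``tangency'' divisor, forcing $\Delta_\f = D_0$ along such $E$; but a critical component in $\cU$ which is $\cF$-invariant would be an $\cF$-invariant compact curve, contradicting $E \subset \cU$, while a critical component transverse to $\cF$ gives $\ord_E(D) = \ord_E(\Delta_\f) > 0$, contradicting $D = 0$. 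Hence no critical component meets $\cU$, i.e. $\Delta_\f \cap \cU = \emptyset$, and $\f|_\cU$ is unramified. Finally, $X \setminus \cU = \cC$ is $\f$-totally invariant by construction, and maximality follows since any larger totally invariant proper analytic set would have to avoid some point of $\cU$ whose forward/backward orbit stays in $\cU$ — more precisely, if $A \supsetneq \cC$ were totally invariant then $A \cap \cU$ is a nonempty proper $\f$-invariant analytic subset of $\cU$, necessarily a union of curves, each of which would be $\cF$-invariant (critical components are $\cF$-invariant, but here one argues instead that an $\f$-invariant curve in $\cU$ gives an algebraic leaf, hence lies in $\cC$) — a contradiction.

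The main obstacle I anticipate is the contraction step: showing that after contracting the finitely many ``exceptional'' $\cF$-invariant curves one remains in the category of nef foliations on a surface with at most cyclic quotient singularities, with $\f$ still well-defined and still satisfying the conclusions of Proposition~\ref{P:key2}. This requires carefully tracking which curves must be contracted (those preventing $\f$ from being a covering, e.g. $(-1)$-curves or chains contracted by $\f$) and checking, via McQuillan's minimal model construction, that the contraction is compatible both with the foliation's reducedness/smoothness at singular points and with the self-map $\f$; the bookkeeping of indeterminacy points and of the interaction between $\ind(\f)$, the critical set, and $\cC$ is the delicate part.
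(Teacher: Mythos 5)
Your proposal gets the broad scaffolding right — the dichotomy via Jouanolou, the observation (from Propositions~\ref{P:key2} and~\ref{P:var}) that no critical curve of $\f$ can meet $\cU$, and the general intent of contracting curves to make $\f$ a covering of $\cU$. But there is a genuine gap precisely at the step the paper works hardest on: ruling out \emph{indeterminacy points} of $\f$ inside $\cU$.

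You assert that ``after these contractions the indeterminacy points of $\f$ all lie in $\cC$ (an indeterminacy point blows up to a curve, which must be $\cF$-invariant by Proposition~\ref{P:var} since it lies in the critical locus, hence lies in $\cC$), so $\f$ restricts to a holomorphic map $\cU \to X$.'' This is a non sequitur. The curve $f(\pi^{-1}(p))$ to which $p$ blows up is a curve in the \emph{target}; that it lies in $\cC$ tells you nothing about the location of the \emph{source} point $p$. A priori nothing prevents $p \in \cU$ with $f(\pi^{-1}(p)) \subset \cC$, so you have not shown $\f$ is holomorphic on $\cU$. The paper closes this gap with a local topological argument: take $p \in \ind(\f) \cap \cU$, a resolution $\pi$, and a small neighborhood $V$; since critical curves avoid $\cU$, the induced map from $\pi^{-1}(V)$ is unramified off $C = \pi^{-1}(p)$. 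If $p$ is a smooth point, $V\setminus\{p\}$ is simply connected, forcing the covering to have degree one, hence $f$ is birational near $p$ and $f(C)$ can be contracted to a smooth point (this is what ``contract finitely many curves'' actually means, and those curves live in $\cC$, not in $\cU$). If $p$ is a quotient singularity, one still contracts $f(C)$ to a quotient singularity $q$ where $\cF$ is smooth, and then the effectivity-with-full-support of $T^*\cF$ on the exceptional divisor over $q$ contradicts nefness via the negative definiteness of the intersection form; so this case simply does not occur. Your proposed mechanism — bounding negative self-intersection curves via ``Camacho–Sad or adjunction-type'' inequalities and invoking McQuillan to stay nef after contraction — is both vague and not what is needed: the curves contracted are not selected by self-intersection bookkeeping but by the degree-one covering argument, and the nefness of $\cF$ is preserved because one only contracts curves inside $\cC$ that $f$ already identifies birationally with a smooth point's preimage.

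A second, smaller gap: your argument for total invariance $\f^{-1}(\cU) = \cU$ is not carried to completion either. You need to rule out a connected curve $C \subset \cC$ contracted by $\f$ to a point $p \in \cU$; the paper does this with one more covering argument (the full preimage of $p$ gives an unramified cover of a punctured ball, hence contractible to a smooth point), followed by the observation that surjectivity of $f: \Gamma \to X$ upgrades $\pi(f^{-1}(\cU)) \subset \cU$ to an equality. Your maximality argument at the end is morally fine, but it leans on the very fact (no $\f$-invariant curves in $\cU$) that requires the machinery above.
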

Note  that $\mathcal U$ may be equal to $X$. Notice also that  $\cU$ may have quotient singularities. When it is the case $\f$ is unramified in the orbifold sense, which means that the critical locus of $\f$ is empty outside the singular points, and $\f$ has finite fibers and is proper.
For sake of convenience, we introduce the following terminology
\begin{definition}
Suppose $(X,\cF)$ is a nef foliation, and $\f$ is a dominant rational map preserving $\cF$. Then the foliation $\cF$ is \emph{$\f$-prepared} if the complement $\cU$ of all $\cF$-invariant curves is a non-empty dense Zariski open subset of $X$ which is $\f$-invariant, and such that the restriction map $\f: \cU \to \cU$ is an unramified orbifold cover.
\end{definition}
Proposition \ref{P:key3} says that any nef foliation without a rational first integral admits a model in which it is $\f$-prepared.
\begin{proof}
By Jouanolou's theorem, if $\cF$ admits infinitely many invariant compact curves, then it is tangent to a fibration. Thus $\cU$ is either empty or the complement
of finitely many algebraic curves, hence Zariski-dense. For the rest of the proof, assume we are in the latter case.
We let $\Gamma$ be a desingularization of the graph of $\f$, and $\pi: \Gamma \to X$, $f: \Gamma \to X$ be the two natural projections with $\pi$ birational, and $f = \f \circ \pi$.

Proposition~\ref{P:key2} implies  $\f^* T^*\cF = T^* \cF$. Thus there is no critical curve intersecting $\cU$ according to  Proposition~\ref{P:var}~(1).

Suppose $p \in \ind(\f) \cap \cU$. Pick a resolution $\pi: \hat{X} \to X$ of $X$ at $p$ such that
the induced map $\hat{\f}: \hat{X} \to X$ is holomorphic at any point in $\pi^{-1} (p)$, and pick a  neighborhood $V$ around $p$.   Then $f$ induces a proper holomorphic map from the neighborhood $\pi^{-1}(V)$ of the divisor $C\= \pi^{-1}(p)$ in $\Gamma$ to a neighborhood of $f(C)$ that is unramified outside $C$.
When $p$ is smooth, $V\setminus \{ p \}$ is simply connected, hence $f$ is birational, and we may contract $f(C)$ to a smooth point.
When $p$ is a quotient singularity, we can still contract $f(C)$ to a point $q$.
Since $\f$ induces an unramified cover of a complement of a small neighborhood $p$ onto a complement of a small neighborhood of $q$, $q$ is also a  quotient singularity
and $\cF$ is smooth at $q$. But this implies the restriction of $T^*\cF$ to $f(\pi^{-1}(V))$ to have full support on $f(C)$ and to be effective. Since $f(C)$ is contractible, the intersection form on the free abelian group of divisors supported on $f(C)$ is negative definite.
Thus $T^*\cF$ cannot be nef which yields a contradiction.

We conclude that we may contract finitely many curves in $\cC$ such that $\cF$ remains nef and $\f$ becomes holomorphic at any point in $\cU$.

\smallskip

We now show that $\mathcal U$ is totally invariant by $\f$. It is clear that it is forward invariant.
Since $\cF$ has no dicritical singularities, for any point of indeterminacy $p\in X$ the divisor $f(\pi^{-1}(p))$ is $\cF$-invariant, hence included in $\cC$. Suppose by contradiction that there exists a connected curve $C\subset \cC$ contracted by $\f$ to a point $p \in \cU$. By the preceding remark, $C$ does not intersect $\ind(\f)$. Replace $C$ by the connected component of $\f^{-1}(p)$ containing it, and pick a small tubular neighborhood $U$ of $C$. Then $\f$ induces a holomorphic map from $U$ onto its image, whose critical set is included in $C$. We thus get an unramified finite covering from $U \setminus C$ onto the complement of $p$ in a small neighborhood. As before   we can contract $C$ to a smooth point, and after finitely iterations we get a new model in which $\pi( f^{-1}(\cU)) \subset \cU$. The inclusion is in fact an equality since $f$ is surjective from $\Gamma$ onto $X$. We conclude that $\f$ induces a proper finite unramified map from $\cU$ onto itself.
\end{proof}

\section{The classification}\label{S:clas}

\subsection{Kodaira dimension $0$}
If $(X,\mathcal F)$ is a nef foliation of Kodaira dimension zero  then, according to \cite[Theorem IV.3.6]{McQ},
$(X,\mathcal F)$ is the quotient of a  foliation $(Y,\mathcal G)$ with trivial canonical bundle on a smooth surface $Y$ by a finite cyclic group $H$
acting on $Y$ without pseudo reflections. Thus the action has isolated fixed points and the points in $X$ below these fixed
points are singular. We summarize the possibilities in the following table, for the order of the groups see \cite{jvpkod0}.

\smallskip
\begin{center}
{\small
\begin{tabular}{|l|l|c|}
\hline
{\bf Ambient Space} $Y$ & {\bf Foliation} $\mathcal G$& {\bf order of } $H$ \\
\hline
\hline
Sesquielliptic surface   & Isotrivial    elliptic  fibration & 1,2,3,4,6\\
\hline
Abelian Surface  & any vector field not tangent & 1,2,3,4,5,\\ & to an elliptic fibration &6,8,10,12\\
\hline
extension of an elliptic curve by $\mathbb C^*$ & Suspension  of a representation  & 1,2,3,4,6\\
compactified as a $\mathbb P^1$-bundle & $\pi_1(E) \to \mathbb C^*$& \\
\hline
extension of an elliptic curve by $\mathbb C$ & Suspension  of a representation& 1,2\\
compactified as a $\mathbb P^1$-bundle&  $\pi_1(E) \to \mathbb C$ & \\
\hline
$\mathbb C^* \times \mathbb C^*$ compactified & any irrational vector field in the & 1,2,3,4,6 \\
 as $\mathbb P^2$ or $\mathbb P^1\times \mathbb P^1$&  Lie algebra of $\mathbb C^*\times \mathbb C^*$
 &\\
\hline
$\mathbb C^* \times \mathbb C$ compactified &any vector field in the  &  1,2\\
 as $\mathbb P^2$ or $\mathbb P^1\times \mathbb P^1$& Lie algebra of $\mathbb C^*\times \mathbb C$ with  non& \\  &
trivial projections to both factors&\\
\hline
\end{tabular}
}
\end{center}

\medskip
From the classification, and as pointed out in \cite{McQ}, follows the following fact.

\smallskip

\noindent{\bf Fact}: when the original foliation has no rational first integral, the space $Y$ is a compactification of a complex Lie group $G$ with abelian Lie algebra and the foliation $\mathcal G$, when restricted to the Lie group, is induced by a Lie subalgebra. In all these cases $G$ is the quotient of $(\mathbb C^2,+)$ by a subgroup $\Gamma$ of translations, the pullback of $\mathcal G$
to $\mathbb C^2$ is a linear foliation and
$H$ is generated by a cyclic element $h \in \mathrm{GL}(2,\mathbb C)$ with both eigenvalues distinct from one.

\medskip

\begin{lemma}\label{L:lift}
Let $\f:(X, \mathcal F) \dashrightarrow (X,\mathcal F)$ be a rational
self-map of a nef foliation with $\mathrm{kod}(\mathcal F) =0$. Assume $\cF$ is $\f$-prepared. If
we write $(X,\mathcal F)$ as the quotient of $(Y,\mathcal G)$ by a cyclic group $H$ as in the beginning of this section
then the map $\f$ lifts to a rational map $\hat \f :(Y,\mathcal G) \dashrightarrow (Y,\mathcal G)$.
\end{lemma}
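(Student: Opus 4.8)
The plan is to realize $\f$ as the descent of a self-map of $\C^2$ through the tower of orbifold coverings $\C^2\to G\to G/H$. By the \textbf{Fact} recalled just above (which applies because $\cF$ being $\f$-prepared forces $\cF$ to have no rational first integral), $Y$ is an equivariant compactification of a complex Lie group $G=\C^2/\Gamma$, where $\Gamma\subset(\C^2,+)$ is a discrete group of translations with $\Gamma\cong\Z^r$, $r\ge 1$, normalized by $h$; the foliation $\cG$ pulls back to a linear foliation $\widetilde{\cG}$ of $\C^2$; and $H=\langle h\rangle$ with $h\in\GL(2,\C)$ of finite order. I would first note that in all the relevant cases the only $\cF$-invariant compact curves lie in the boundary $X\setminus(G/H)$: an invariant compact curve inside $G/H$ would, since $\widetilde{\cG}$ is linear, force $\cF$ to be tangent to a pencil, contradicting the absence of a rational first integral. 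Hence the set $\cU=X\setminus\cC$ of Proposition~\ref{P:key3} is exactly $G/H$, and $\f\colon G/H\to G/H$ is an unramified orbifold self-covering.

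Next I would pass to the orbifold universal cover. Since $H$ acts on $G$ without pseudo-reflections (as in McQuillan's Theorem~\ref{T:McQ}), the composite $\C^2\to G\to G/H$ is an orbifold covering, with deck group the affine group $\widetilde{\Gamma}:=\Gamma\rtimes H\subset\mathrm{Aff}(\C^2)$, acting properly discontinuously because $\Gamma$ is discrete and $H$ is finite. As $\f$ is an unramified orbifold self-cover of $G/H$, it lifts to a holomorphic self-map $\widetilde{\f}\colon\C^2\to\C^2$ together with an injective homomorphism $\alpha:=\f_*\colon\widetilde{\Gamma}\hookrightarrow\widetilde{\Gamma}$, of finite-index image and well-defined up to conjugacy, such that $\widetilde{\f}\circ\gamma=\alpha(\gamma)\circ\widetilde{\f}$ for all $\gamma\in\widetilde{\Gamma}$. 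Moreover $\widetilde{\f}^{\,*}\widetilde{\cG}=\widetilde{\cG}$: this is a diagram chase using $\f^{*}\cF=\cF$ and the fact that none of the covering maps involved has divisorial ramification, so that pull-back of foliations under them involves no correction divisor.

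The heart of the matter is to show that $\alpha$ maps $\Gamma=\ker(\widetilde{\Gamma}\twoheadrightarrow H)$ into itself. For this I would use the intrinsic description of $\Gamma$ inside $\widetilde{\Gamma}$ as the subgroup generated by the elements of infinite order. Indeed, an element of $\widetilde{\Gamma}\setminus\Gamma$ is an affine transformation whose linear part is a non-trivial power $h^{k}$ of $h$; because $H$ has no pseudo-reflections, $h^{k}$ has no eigenvalue equal to $1$, so the transformation has a unique fixed point, is conjugate in $\mathrm{Aff}(\C^2)$ to $h^{k}$, and therefore has finite order. Thus every element of infinite order of $\widetilde{\Gamma}$ already lies in $\Gamma\cong\Z^{r}$ ($r\ge1$), and such elements generate $\Gamma$; since an injective homomorphism carries elements of infinite order to elements of infinite order, $\alpha(\Gamma)\subseteq\Gamma$ (and $\alpha(\Gamma)$ has finite index in $\Gamma$). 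Then $\widetilde{\f}\circ\gamma=\alpha(\gamma)\circ\widetilde{\f}$ with $\alpha(\gamma)\in\Gamma$ shows that $\widetilde{\f}$ descends to a holomorphic self-map $\hat{\f}$ of $G=\C^2/\Gamma$ with $\pi\circ\hat{\f}=\f\circ\pi$, where $\pi\colon Y\to X$ restricts to $G\to G/H$. As $Y$ is a projective compactification of $G$, $\hat{\f}$ extends to a dominant rational self-map $\hat{\f}\colon Y\dashrightarrow Y$, and $\hat{\f}^{\,*}\cG=\cG$ because $\widetilde{\f}^{\,*}\widetilde{\cG}=\widetilde{\cG}$ forces $\hat{\f}^{\,*}\cG$ and $\cG$ to coincide on the dense open set $G\subset Y$. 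This is the desired lift $\hat{\f}\colon(Y,\cG)\dashrightarrow(Y,\cG)$.

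The main obstacle is the third step: one must recognize the normal subgroup $\Gamma\subset\widetilde{\Gamma}$ — equivalently the intermediate covering $Y\to X$ — in a way that is visibly stable under an arbitrary $\f_*$, and the mechanism that makes this possible is exactly the torsion bookkeeping for $H$ (no pseudo-reflection means no non-trivial element of $H$, hence no element of $\widetilde{\Gamma}\setminus\Gamma$, has infinite order). Conceptually this reflects that $Y\to X$ is the canonical covering attached to $\cF$ — for instance it trivializes $T^{*}\cF$, which is preserved by $\f^{*}$ by Proposition~\ref{P:key2} — but the group-theoretic formulation above seems the most economical way to exploit it. A minor technical point to settle along the way is the identification $\cU=G/H$, i.e. the absence of $\cF$-invariant compact curves in the Lie-group part.
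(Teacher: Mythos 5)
Your proof is correct and is essentially the same argument as the paper's: both reduce to lifting $\f$ through the covering $G\to\cU$ of the open set $\cU=G/H$, and both hinge on the observation that, because $H$ acts without pseudo-reflections, every element of $\pi_1^{\mathrm{orb}}(\cU)$ with non-trivial linear part has a fixed point and hence finite order, so the infinite-order elements generate exactly $\Gamma\cong\pi_1(G)$, a characterization preserved by the injective homomorphism $\f_*$. The paper applies the classical lifting criterion $g_*\pi_1(G)\subset\pi_*\pi_1(G)$ directly (with $g=\f\circ\pi$), whereas you pass all the way up to the orbifold universal cover $\C^2$ and descend through $\C^2/\Gamma$; these are just two phrasings of the same diagram chase, and your bookkeeping of $\widetilde{\Gamma}=\Gamma\rtimes H$ and of $\alpha=\f_*$ is consistent with the paper's. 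One small added bonus of your write-up is that you give a (brief) justification for $\pi^{-1}(\cU)=G$ via transitivity of translations, which the paper simply asserts.
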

\begin{remark}
Below we give a topological proof.  Another proof can be obtained in the spirit of~\cite[p.209-210]{CF}.
\end{remark}
\begin{proof}
Let $\cU$ be the complement of all $\cF$-invariant curves.
Because $\pi :Y \to X$ is a finite map and $\mathcal U$ is Zariski dense, it suffices to lift the restriction of $\f$ to $\mathcal U$. Note that the preimage of $\cU$ by $\pi$ coincides with the Lie group $G$ alluded to above. By assumption, the map $\f|_{\mathcal U}$ is an orbifold covering of $\mathcal U$.
Since the  restriction to $G=\pi^{-1}(\mathcal U)$ of the natural
quotient map $\pi: Y \to X$ is also an orbifold covering , the composition map $g = \f \circ \pi :  G \to \cU$ is an orbifold covering.
It follows from the diagram
\[
 \xymatrix{
 G \ar[d]_{\pi} \ar[dr]^{g} \ar@{-->}[r]^{\exists {\hat{\f}}_{|G} ? }  & G\ar[d]^{\pi}
  \\
 \mathcal U  \ar@{>}[r]^{\f_{|\mathcal U}} & \mathcal U}
\]
that to  lift $\f_{|\mathcal U}$ to $\hat \f: Y \dashrightarrow Y$
it suffices to check that $ g_* \pi_1(G) \subset \pi_* \pi_1(G)$, where $g_*:\pi_1(G) \to \pi_1^{\mathrm{orb}}(\mathcal U)$
is the natural map induced by $g$. For the definition of the orbifold fundamental group $\pi_1^{\mathrm{orb}}$ and its basic properties, the reader can consult \cite{Hae,Th}.

Since $g$ is an orbifold covering , $g_*$ is injective. In particular, every element in $g_* \pi_1(G)$ distinct from the identity has infinite order. To conclude it is sufficient to prove that any element $f\neq \mathrm{id} \in  \pi_1^{\mathrm{orb}}(\mathcal U)$ of infinite order belongs to $\pi_* \pi_1(G)$.
The group $\pi_1^{\mathrm{orb}}(\mathcal U)$
can be interpreted as a subgroup of $\mathrm{Aff}(2,\mathbb C)$.
Write $f(z) = A z + B$  with $A \in \mathrm{GL}(2, \mathbb C)$ and $ B\in \C^2$.
Recall from the fact above that $\pi_1^{\mathrm{orb}}(\mathcal U)$ is an extension of a finite cyclic group generated by some element $h\in\mathrm{GL}(2, \mathbb C)$ which is not a pseudo-reflection by a subgroup of translations canonically isomorphic to $\pi_1(G)$.
If $A \neq \mathrm{id}$, then $A$ is a power of $h$, hence is periodic of period $k>1$. Since $h$ is not a pseudo-reflection, one has
$
f^{\circ k}(z) = A^k z + \left(\sum_{i=0}^{k-1} A^i \right) \cdot B = z \, .
$
This contradicts our assumption. Whence $f$ is a translation, and belongs to $\pi_*\pi_1(G)$. This concludes the proof.
\end{proof}

\begin{thm}\label{T:clas-kod0}
Let $(X,\mathcal F)$ be a foliation on a projective surface with $\kod(\cF) =0$  and without rational first integral. Suppose $\f$ is a dominant non-invertible rational map preserving $\cF$.
Then up to birational conjugacy and to a finite cyclic covering  of order $N$, we are in
one of the following five cases.
\begin{enumerate}
\item[$\kappa_0$(1):] The surface is a torus $X = \C^2/ \Lambda$, $\cF$ is a linear foliation and $\f$ is a linear diagonalizable map.

\noindent In this case $N\in\{1,2,3,4,5,6,8,10,12\}$.
\item[$\kappa_0$(2):] The surface is a ruled surface over an elliptic curve $\pi: X \to E$ whose monodromy
is given by a representation $\rho: \pi_1(E) \to (\C^*,\times)$. In affine coordinates
$x$ on $\C^*$ and $y$ on the universal covering  of  $E$,  $\cF$ is induced by the $1$-form $\omega =dy + \la x^{-1} d x$ where $\lambda \in \mathbb C$, and $\f(x,y) = (x^k,ky)$ where $k\in \Z \setminus \{ -1, 0, 1 \}$.

\noindent In this case $N\in \{1,2,3,4,6\}$.
\item[$\kappa_0$(3):] Same as in case~$\kappa_0$(2) with $\rho: \pi_1(E) \to (\C,+)$, $\omega= dy + \lambda dx$  where $\lambda \in \mathbb C$, $\f(x,y) = (\zeta x+b,\zeta y)$ where
$b \in \mathbb C$, and $\zeta \in \C^*$ with
$|\zeta| \neq 1$.

\noindent In this case $N \in \{1,2\}$.
\item[$\kappa_0$(4):] The surface is $\PP^1 \times \PP^1$, the foliation $\cF$ is given in affine coordinates by the form
$\la x^{-1} dx + dy$ and $\f(x,y) = (x^k, ky)$ with $\la \in \C^*$ and $k \in \Z \setminus \{ -1, 0, 1\}$.

\noindent In this case $N\in \{1,2\}$.
\item[$\kappa_0$(5):] The surface is $\PP^1 \times \PP^1$, the foliation $\cF$ is given in affine coordinates by the form
$\la x^{-1}dx + \mu y^{-1} dy$ and $\f(x,y) = (x^ay^b, x^cy^d)$ where $\la,\mu \in \C^*$,
with $M= \left[\begin{smallmatrix}
a &b \\c &d
\end{smallmatrix}\right]
\in \GL(2,\Z)$, with $\la/\mu \notin \Q_+$, $|ad-bc| \ge 2$, and $M$ diagonalizable over $\C$.

\noindent In this case $N\in \{1,2,3,4,6\}$.
\end{enumerate}
\end{thm}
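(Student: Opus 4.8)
The plan is to reduce the statement to the concrete classification of nef foliations of Kodaira dimension zero recalled at the beginning of the subsection, exploit the ``Fact'' (that the ambient surface $Y$ is an equivariant compactification of $G = \C^2/\Gamma$ with abelian Lie algebra, with $\cG$ a linear foliation), and then push everything down to the affine world via Lemma~\ref{L:lift}. First I would invoke Proposition~\ref{P:key3}: since $\cF$ has no rational first integral it is not tangent to a fibration, so after contracting finitely many $\cF$-invariant curves we may assume $\cF$ is $\f$-prepared, with $\cU = X \setminus \cC$ a dense Zariski-open $\f$-invariant subset on which $\f$ restricts to an unramified orbifold cover. Next I would pass to the cyclic cover: write $(X,\cF)$ as the quotient of $(Y,\cG)$ by the cyclic group $H$ of order $N$ as in McQuillan's theorem, so $N$ lies in the finite list of the table, and apply Lemma~\ref{L:lift} to lift $\f$ to a dominant rational self-map $\hat\f : (Y,\cG) \dashrightarrow (Y,\cG)$. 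Restricting to $G = \pi^{-1}(\cU)$, the lift $\hat\f$ is an unramified self-cover of the complex Lie group $G$, hence (by the classical rigidity of covering self-maps of Lie groups, or equivalently by the description of $\pi_1^{\mathrm{orb}}$ as a subgroup of $\mathrm{Aff}(2,\C)$ used in the proof of Lemma~\ref{L:lift}) $\hat\f$ lifts further to an affine map $A z + B$ on $\C^2$ normalizing $\Gamma$ and preserving the linear foliation $\cG$.

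With this affine normal form in hand, the proof becomes a bookkeeping exercise organized by the rank $r \in \{1,2,3,4\}$ of $\Gamma$, matching the rows of the table. In each case the linear foliation on $\C^2$ is defined by a closed linear $1$-form $\alpha\, dx + \beta\, dy$; the requirement that the affine map $z \mapsto Az+B$ send the leaves $\{\alpha x + \beta y = \mathrm{cst}\}$ to leaves forces the line $\ker(\alpha,\beta)$ to be $A$-invariant, which constrains $A$. Simultaneously $A$ must normalize the lattice $\Gamma$, so $A$ (in suitable coordinates adapted to $\Gamma$) lies in $\GL(r,\Z)$ on the lattice directions; and non-invertibility of $\f$ forces $|\det|$ of the relevant block to be $\ge 2$. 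The constraint that the quotient by $H$ be well-defined and that $\cF$ have no rational first integral rules out the cases where the slope $\la/\mu$ (or $\la$) is a nonzero rational or where the foliation is a fibration. Finally, one checks that the only nilpotent or non-semisimple possibilities for $A$ would force $T^*\cF$ not to be preserved or would create a rational first integral, whence $A$ (resp.\ $M$) must be diagonalizable over $\C$; this yields precisely the five normal forms $\kappa_0(1)$--$\kappa_0(5)$, the abelian-surface case giving the torus, the two $\PP^1$-bundle-over-$E$ rows giving $\kappa_0(2)$ and $\kappa_0(3)$, and the $\C^*\times\C^*$ and the remaining toric rows collapsing (after birational conjugacy to $\PP^1\times\PP^1$) to $\kappa_0(4)$ and $\kappa_0(5)$.

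The main obstacle I anticipate is not the affine linear algebra but the careful tracking of what ``up to birational conjugacy and finite cyclic covering'' buys us: one must verify that the various compactifications listed in the table (which a priori could be $\PP^2$, a Hirzebruch surface, a $\PP^1$-bundle, etc.) can all be brought, by an $\f$-equivariant birational modification, to the specific models in the statement, and that the lifted map $\hat\f$ descends to a genuine map of the chosen model rather than merely a rational one. Concretely, one needs that contracting $\cF$-invariant curves to reach the $\f$-prepared model in Proposition~\ref{P:key3} is compatible with the quotient presentation of Lemma~\ref{L:lift}, and that the affine map normalizing $\Gamma$ indeed extends holomorphically across the chosen boundary divisors. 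A secondary subtlety is excluding, in case $\kappa_0(3)$, the possibility $|\zeta| = 1$: this requires the dynamical input that $\f$ is non-invertible and its degrees must grow, so the eigenvalue on the $\C$-direction cannot lie on the unit circle; likewise in $\kappa_0(2)$ and $\kappa_0(4)$ the exponent $k$ must satisfy $|k|\ge 2$ for the same reason. Once these compatibility and genericity points are settled, the enumeration itself is routine.
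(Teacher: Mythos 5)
Your proposal is essentially the paper's own proof, run at a slightly higher level of abstraction: pass to a $\phi$-prepared nef model (Proposition~\ref{P:key3}), use the McQuillan classification table to write $(X,\cF)$ as a cyclic quotient of a toric/abelian $(Y,\cG)$ with $\cG$ linear, lift $\phi$ to $Y$ via Lemma~\ref{L:lift}, and then read off the normal forms row by row. The paper's actual proof text is nearly as terse as your first paragraph and defers exactly this bookkeeping to the table and to Lemma~\ref{L:lift}.

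The one place where the paper does genuine work, and where your sketch is materially thinner, is the assertion that $\phi$ (equivalently the linear part $A$) is diagonalizable in cases $\kappa_0(1)$ and $\kappa_0(5)$. You say ``one checks that the only nilpotent or non-semisimple possibilities for $A$ would force $T^*\cF$ not to be preserved or would create a rational first integral.'' The first alternative is misleading: a Jordan block $\zeta\left(\begin{smallmatrix}1&1\\0&1\end{smallmatrix}\right)$ does preserve the linear foliation $dy$ on $\C^2$, so nothing is wrong at the level of the $1$-form. What actually kills the non-diagonalizable case on the torus is an arithmetic argument that you do not supply: regard $A$ as a real $4\times4$ matrix preserving the lattice $\Lambda\cong\Z^4$; its characteristic polynomial is then the square of an integral quadratic $X^2-aX+b$; the unique $A$-invariant real $2$-plane $P=\ker(A^2-aA+b\,\mathrm{id})$ is therefore defined over $\Z$ and is also the complex line $\{y=0\}$; hence $\Lambda\cap\{y=0\}$ has rank $2$, the leaf through $0$ is an elliptic curve, and translation-invariance of $\cF$ then forces a rational first integral --- a contradiction. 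In $\kappa_0(5)$ the corresponding check (non-diagonalizable $M\in\GL(2,\Z)$ forces $\phi(x,y)=(x^d,xy^d)$, whose only invariant foliation is the rational pencil $\{x=\mathrm{cst}\}$) is short but still needs to be stated. To turn your sketch into a complete proof you would need to fill in these two arguments; otherwise the route you take coincides with the paper's.
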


\begin{proof}Most of the proof follows immediately from the classification stated in the beginning of this section
and Lemma \ref{L:lift}. What is perhaps not completely evident is the assertion that $\f$ is a linear diagonalizable map in cases $\kappa_0(1)$ and $\kappa_0(5)$.
Let us comment on that. Suppose we are in case $\kappa_0(1)$, and $\f$ is not diagonalizable.
Then in $\C^2$ we can write $\f(x,y) = (\zeta x + y , \zeta y)$ for some $\zeta \in \C^*$, and $\f$ preserves a lattice $\Lambda \subset \C^2$. Forgetting about the complex structure, we may now view $\f$ as a linear map on $\R^4$ preserving $\Z^4$ with eigenvalues $\zeta$ and $\bar{\zeta}$ (each of multiplicity $2$). The characteristic polynomial of $\f$ is thus the square of a quadratic polynomial $X^2 - aX +b$ with integral coefficients. From the Jordan decomposition of $\f$, we see that there exists a unique $\f$-invariant $2$ dimensional real plane $P$. In $\R^4$, it is given by $P= \mathrm{ker}\, (\f^2 -a\f+b\mathrm{id})$, and is hence defined over $\Z$. In $\C^2$, it equals $P = \{ y =0 \}$ and is invariant under complex conjugation.
We have thus proved that the intersection of $\Lambda$ with the complex line $\{ y =0 \}$ is a rank $2$ lattice. It follows that the foliation $dy$ has a compact leaf. Since translations act transitively on
$\C^2/\Lambda$ by preserving the foliation, we see that all leaves of the foliations are elliptic curves which implies the foliation to have a rational first integral. A contradiction.

In the case $\kappa_0(5)$, the matrix is again diagonalizable. Otherwise, one has
$\f(x,y) = (x^d, xy^d)$ with $d \in \Z$, and the invariant foliation is the rational fibration given by $\{ x= \mathrm{cst}\}$.
\end{proof}

\medskip

\subsection{Kodaira dimension $1$} The next result classifies the pairs $(\cF, \f)$ when $\cF$ has Kodaira dimension one and
does not admit a rational first integral.

\begin{thm}\label{T:clas-kod1}
Let $(X,\mathcal F)$ be a foliation on a projective surface with $\kod(\cF) =1$  and without rational first integral.
Suppose $\f$ is a dominant non-invertible rational map preserving $\cF$.
Then up to birational conjugacy and to a finite cyclic covering generated by an automorphism $\tau$ of order $N$, we are in one of the following two cases.
\begin{enumerate}
 \item[$\kappa_1$(1):] The surface $X$ is $\PP^1 \times \PP^1$, the foliation is a Riccati foliation given
       in coordinates by the form
\[
\omega=\frac{dy}{y} + \frac{m\, dx}{(k-1)\,x} +  \mu x^n dx~,
\] and $  \f(x,y)=(\la x, x^m y^k)$,
where $\la, \mu \in \C^*$, $k \in \Z \setminus \{ -1,0,1\}$, $m \in \Z$, $n \in \Z\setminus \{ -1 \}$,
such that $\la$ is not a root of unity and $k = \la^{n+1}$. Moreover $ n \notin \{-2, -1, 0\}$ if $m =0$.

\noindent In this case, $N\in\{1,2\}$, and $\tau= (\pm x^{\pm1}, \pm y^{\pm1})$.
 \item[$\kappa_1$(2):] The surface $X$ is $\PP^1 \times E$ where
 $E$ is an elliptic curve, the foliation is a turbulent foliation induced by the form
\[
\omega= dy + x^n dx~,
\]
where $x$ is a coordinate on $\PP^1$ and $y$ on the universal covering  of $E$,
and $\f(x,y)=(\la x, \la^{n+1}y)$ with $\la \in \C^*$ not a root of unity, $n \in \Z \setminus \{ -2, -1, 0\}$.

\noindent In this case, $\tau= (\zeta x, \xi y)$, $N =1$ when $n\ge1$, and $N\in \{ 1,2,3,4,6\}$ otherwise.
\end{enumerate}
\end{thm}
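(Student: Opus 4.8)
The strategy is to combine the Mori-theoretic structure of nef foliations of Kodaira dimension one with the numerical rigidity forced by an invariant non-invertible map, following the same scheme used in the $\kod(\cF)=0$ case. First I would reduce to a nef, $\f$-prepared model: by Miyaoka's theorem $\cF$ is either tangent to a rational fibration (excluded here since $\cF$ has no rational first integral and, having $\kod=1$, is not a rational pencil) or becomes nef after a birational morphism, and by Proposition~\ref{P:key3} we may further arrange that the complement $\cU$ of the $\cF$-invariant compact curves is a dense Zariski-open set on which $\f$ restricts to an unramified orbifold self-covering. From the classification table, a nef foliation with $\nu(\cF)=\kod(\cF)=1$ is a Riccati, turbulent, non-isotrivial elliptic, or isotrivial genus $\ge 2$ fibration; since $\cF$ has no rational first integral it is \emph{not} a fibration itself, which rules out the two elliptic/genus-$\ge 2$ fibration cases and leaves only Riccati and turbulent. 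So the transverse fibration $\pi:X\to B$ has rational fibers (Riccati) or elliptic fibers (turbulent), and $\pi$ is uniquely attached to $\cF$, hence preserved by $\f$.

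Next I would analyze the base map. The foliation $\cF$ being $\pi$-transverse with $\kod(\cF)=1$ forces, via the structure theory, the base $B$ to be $\PP^1$ and the monodromy of the pair $(\pi,\cF)$ to have an essentially abelian (in fact solvable) structure: a Riccati foliation with $\kod=1$ comes from a representation $\pi_1(B\setminus S)\to\mathrm{Aff}(\C)$ that is not finite but whose image is abelian up to finite index, and similarly the turbulent case corresponds to a representation into the affine group of the elliptic fiber. The map $\f$ descends to a non-invertible rational self-map $\bar\f$ of $B=\PP^1$; since $\f$ preserves the invariant compact curves, $\bar\f$ preserves the finite set $S$ of special fibers, and because a non-invertible $\bar\f$ can only permute/ramify over such a set if $S$ is small, one is pushed to $|S|\le 2$ and, after coordinates, $\bar\f(x)=\la x$ (Riccati, two special points $0,\infty$) or $\bar\f$ an affine map (turbulent). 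From here the analysis parallels Lemma~\ref{L:lift}: lift $\f$ through the orbifold covering $\C$ (resp.\ $E$-bundle) to get that $\f$ acts affinely on the universal cover of a leaf-space, which pins the action on the fiber coordinate $y$ to be multiplicative $y\mapsto x^m y^k$ (resp.\ $y\mapsto \la^{n+1} y$).

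Finally, I would impose the invariance $\f^*\cF=\cF$ on an explicit normal form for $\om$. Writing $\om=\frac{dy}{y}+\big(\text{rational function of }x\big)dx$ in the Riccati case (resp.\ $\om=dy+g(x)dx$ in the turbulent case), the condition $\f^*\om\wedge\om=0$ becomes a functional equation for the $x$-part; solving it forces the $x$-part to be of the form $\frac{m}{(k-1)x}+\mu x^n$ (resp.\ $x^n$), and simultaneously yields the arithmetic constraints $k=\la^{n+1}$, $\la$ not a root of unity, $n\neq -1$, and the exclusions $n\notin\{-2,-1,0\}$ when $m=0$ (these last ones come precisely from ruling out the degenerate cases where $\cF$ acquires a rational first integral or $\f$ becomes invertible). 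The covering automorphism $\tau$ normalizing everything is then read off from the finite-order symmetries of the normal form: it must commute with $\f$ and preserve $\om$ up to scalar, which on $\PP^1\times\PP^1$ leaves only $\tau=(\pm x^{\pm1},\pm y^{\pm1})$, and on $\PP^1\times E$ the automorphisms $(\zeta x,\xi y)$ compatible with the elliptic structure, giving $N\in\{1,2,3,4,6\}$ (and $N=1$ once $n\ge 1$ breaks the extra symmetry). The main obstacle I anticipate is the middle step: extracting the precise shape $\bar\f(x)=\la x$ and the correct finite set of special fibers from the condition that a \emph{non-invertible} map preserves a Riccati/turbulent foliation of Kodaira dimension one — this requires carefully tracking how $\f^*$ acts on $\NS(X)$ and on the monodromy representation, using Proposition~\ref{P:key2} (so that $\f^*T^*\cF=T^*\cF$ and the critical set is $\cF$-invariant) to forbid the map from creating new special fibers or destroying the numerical balance. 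Once the base dynamics and the transverse structure are nailed down, the remaining normal-form computations are routine.
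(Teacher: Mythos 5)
There is a genuine gap at the heart of your middle step, and it concerns the map induced on the base. You assert that $\f$ descends to a \emph{non-invertible} rational self-map $\bar\f$ of $B=\PP^1$ and then argue that $|S|\le 2$ because a non-invertible map only tolerates small totally invariant sets. This is backwards: since $\f^* T^*\cF = T^*\cF$ (Proposition~\ref{P:key2}) and the fibration $\pi$ is cut out by the Iitaka map of $T^*\cF$, the map $\f$ sends fibers to fibers with $\f^*[\text{fiber}]=[\text{fiber}]$, so the induced map on $B$ is an \emph{automorphism}. The non-invertibility of $\f$ lives entirely in the fiber direction (e.g.\ $\f(x,y)=(\la x, x^m y^k)$ with $k\ge 2$). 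The real content of the paper's Lemma~\ref{l:ricandturb} is then to show that this automorphism is \emph{not periodic}: if it were the identity (or periodic), some iterate of $\f$ would act on a generic fiber with degree $e(\f)\ge 2$, hence with at least three repelling fixed points; the irreducible component of the fixed locus through such a point would be an $\cF$-invariant curve meeting each fiber in $\ge 3$ points, contradicting total $\f$-invariance (Proposition~\ref{P:key3}) because no non-invertible self-map of $\PP^1$ or an elliptic curve has a totally invariant finite set of cardinality $\ge 3$. From non-periodicity one concludes $B=\PP^1$, at most two totally invariant points on the base, and hence at most two $\cF$-invariant fibers. Your plan never confronts this step, and the incorrect premise undermines the justification for $|S|\le 2$ and for the normal form of the base map.

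A second, smaller omission: once $f\in\aut(\PP^1)$ is a non-periodic automorphism, there are two cases up to conjugacy, $f(x)=\la x$ with $\la$ not a root of unity and $f(x)=x+1$, and the second must be ruled out \emph{separately in both the Riccati and the turbulent cases} by solving the resulting functional equation (e.g.\ $kh(x)-h(x+1)=a'(x)/a(x)$ for Riccati and $h(x+1)=\varphi_0 h(x)$ for turbulent) and showing it has no admissible solution. Your plan lumps the two cases as ``an affine map'' and never eliminates the translation case. Finally, the exclusions $n\notin\{-2,-1,0\}$ (when $m=0$ in the Riccati case, and always in the turbulent case) come from computing $T^*\cF$ explicitly and seeing that otherwise $\kod(\cF)=0$, not quite from ``$\cF$ acquires a rational first integral or $\f$ becomes invertible'' as you suggest. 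The rest of your outline — the reduction to nef $\f$-prepared models, the elimination of the two fibration rows of the table, and the functional-equation computation once the base dynamics are pinned down — is aligned with the paper.
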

\begin{proof}
As before, we may suppose $(X,\cF)$ is a nef foliation such that
the complement of the compact $\cF$-invariant curves is Zariski dense and totally invariant by $\f$.
Under the additional condition that $\kod(\cF) =1$, we shall need the following result.
\begin{lemma}\label{l:ricandturb}
There exists a rational or elliptic fibration $\pi: X \to {\mathbb P}^1$
whose generic fiber is transversal to $\cF$ and that admits one or
two $\cF$-invariant fibers.
Moreover, one can find $f\in \aut({\mathbb P}^1)$ which is not periodic,
such that $\pi \circ \f = f \circ \pi$. In particular, the $\cF$-invariant fibers
are totally invariant by $\f$.
Finally singularities of the ambient space $X$ are all located on $\cF$-invariant fibers.
\end{lemma}
A proof is given at the end of this section.

\smallskip

\noindent {\bf 1.} Suppose first that $\pi$ is a rational fibration. Then we may make a sequence of modifications centered on one of the  $\cF$-invariant fibers such that $X$ becomes $\PP^1 \times \PP^1$. Note that the
condition on $\cF$ being nef may  not be satisfied anymore, but  we can keep the properties listed in Lemma~\ref{l:ricandturb}. Let us consider the component $\Delta_\perp$ of the critical set  of $\f$ which is not invariant by the fibration.
A non-invertible rational map of $\PP^1$ has at least two critical points, hence $\Delta_\perp \cdot \pi^{-1}(b) \ge 2$. On the other hand, by Propositions~\ref{P:var} and~\ref{P:key3}, $\Delta_\perp$ is an $\cF$-invariant compact curve that is totally invariant by $\f$, so
that $\Delta_\perp \cdot \pi^{-1}(b)=2$. If $\Delta_\perp$ is irreducible, the fiber product $\Delta_\perp \times_{\pi} X$ is a two-fold covering of $X$ ramified over $\Delta_\perp$. Replacing $X$ by
this fiber product if necessary, we may  assume that $\Delta_\perp$ has
two irreducible components $C_1$ and $C_2$.

We now pick coordinates $(x,y)\in \PP^1 \times \PP^1$ such that $\pi(x,y) = x$, $C_1 = \{ y=0 \}$, $C_2 = \{ y = \infty\}$. We assume $\pi^{-1}(\infty)$ is an $\cF$-invariant fiber, and the other $\cF$-invariant (if it exists) is $\pi^{-1}(0)$.
The foliation $\cF$ can thus be defined by a form $\om = y^{-1} dy + h(x) dx$ for some rational function $h$. Poles of $h$ correspond to $\cF$-invariant fibers, hence we may write $h(x) = x^n p(x)$
for some $n \in \Z$, and some polynomial $p \in \C[x]$ with $p(0) \neq 0$.
We also have  $\f(x,y) = (f(x), a(x) y^k)$
with $a(x) \in \C(x)$ and $f(x) = x+1$ or $= \la x$ with
$\la \in \C^*$ not a root of unity.

Assume first that $f(x) = \la x$. Since $\cF$ is $\f$-invariant, we have $\f^* \om= b \cdot \om$ for some meromorphic function $b$. Expanding this equation in terms of $x$ and $y$, and identifying both hand sides, we find that
$x^n (k p(x) - \la^{n+1} p(\la x)) = a'(x)/a(x)$.
The left-hand side is holomorphic outside $0 \in \mathbb C$, thus $a(x) =  a_0 x^m$ for some $m \in \Z$. By changing the coordinate $y$ by $a_0^{1/k-1}y$, we may assume $a_0 =1$.
Therefore, we obtain
\begin{equation*}
x^{n+1} \left( k p(x) - \lambda^{n+1} p(\lambda x)\right) = m \, .
\end{equation*}
Elementary considerations now show that we necessarily have
$$\phi(x,y)   = (\la x, x^m y^k), \text{ and } \om = \frac{dy}{y}+  \frac{m\, dx}{(k-1)x}  + \mu x^n dx~.$$
with $\la \in \C^*$, $\mu\in\C$, $k \ge 2$, $m \in \Z$, $n\in \Z\setminus\{ -1\}$, such that $\la$ is not a root of unity and $k = \la^{n+1}$.

Since $\cF$ has no rational first integral, $\mu$ is non zero. One can then compute $T^*\cF$ by relating it to $\pi^* K_{\PP^1}$, see for instance~\cite[\S 4]{Br}:
\begin{eqnarray}
&& T^*\cF = \pi^* \cO_{\PP^1}(-2) + \max \{ 1, -n \} \pi^{-1}(0) + \max \{ 1, n+2 \} \pi^{-1}(\infty)
\text{ if } m \neq 0, \label{e:can1}  \nonumber \\
&& T^*\cF = \pi^* \cO_{\PP^1}(-2) + \max \{ 0, -n \} \pi^{-1}(0) + \max \{ 0, n+2 \} \pi^{-1}(\infty)
\text{ if } m =0~. \label{e:can2}
\end{eqnarray}
Whence $\kod(\cF) = 1$ if and only if  $m\neq 0$ and  $n \neq -1$; or $m=0$ and $n \neq -2, -1, 0$ as required.

Assume now that $f(x) = x+1$. The fiber $\pi^{-1}(0)$ being not $\f$-invariant, it is not $\cF$-invariant, and thus $h(x)$ is a polynomial.
Similarly to the previous case, we need to solve $k h(x) - h(x+1) = a'(x)/a(x)$. Because the left-hand side has no poles $a(x)$ must be constant, and consequently  $h$ must satisfies the functional equation
$kh(x) - h(x+1) = 0$, which is impossible. This concludes the proof of the theorem in the case $\pi$ is a rational fibration.

\smallskip

\noindent {\bf 2.} Suppose now that $\pi$ is an elliptic fibration.
By Lemma~\ref{l:ricandturb}, the action of $\f$ on the base of the fibration is not periodic, hence the fibration is isotrivial. As in the previous case, pick coordinates $x\in \PP^1$ such that the fiber $\pi^{-1}(\infty)$ is  $\cF$-invariant, and the other possible $\cF$-invariant fiber is $\pi^{-1}(0)$.
Then $f(x) = \la x$ with $\la \in \C^*$ not a root of unity, or $f(x) = x+1$. We also pick an affine coordinate $y$ on the universal covering  of  $E$.

Suppose first $f(x) = \la x$.
After a base change of the form $x \mapsto x^l$,  one can assume that $X = \mathbb P^1 \times E$ for some elliptic curve $E$. We may thus write $\f(x,y)=( \lambda x , \varphi(y))$ for some non-invertible $\varphi \in \mathrm{End}(E)$, and the foliation $\cF$ is induced by a form $\omega = dy + h(x) dx$ with $h \in \C(x)$ with poles only at $0$ and $\infty$. Denote by $\varphi_0$ the multiplier of $\varphi$.
It is an integer if $E$ has no complex multiplication, and a quadratic integer otherwise.
Since $|\varphi_0|^2$ is the topological degree of $f$, we have $|\varphi_0|>1$.
The $\f$-invariance of $\cF$ reads in this case as $\lambda h(\lambda x) = \varphi_0\cdot h(x)$.
Thus $h(x)=\mu x^n$, $\varphi_0 = \lambda^{n+1}$ with $n \in \Z \setminus\{-1\}$ and $\mu \in \C^*$.
Replacing $x$ by $\mu^{1/n+1}x$, we may take $\mu =1$.
This shows $\f = (\la x, \la^{n+1}y)$ and $\om = dy + x^n dx$.
Finally $T^*\cF$ can be computed as before. One finds that it satisfies~\eqref{e:can2}.
Hence $\kod(\cF) = 1$ if and only if  $n \neq -2, -1, 0 $ as required.

Suppose now $f(x) = x+1$. Then there is only one $\cF$-invariant fiber, namely $\pi^{-1}(\infty)$. Since $\C$ is simply connected, the monodromy of the fibration
 is trivial, and we may assume $X = \PP^1 \times E$, $\f(x,y)=( x+1, \varphi(y))$
and $\cF$ is induced by a one-form $\omega = dy + h(x) dx$
as before. The $\f$-invariance of $\cF$ implies $h(x+1) = \varphi_0\cdot h(x)$. But this equation has no solution when $|\varphi_0| >1$. This concludes the proof.
\end{proof}

\begin{proof}[Proof of Lemma~\ref{l:ricandturb}]
Since $\kod(\cF) =1$ and $\cF$ does not admit  a rational first integral, for  suitably large $n$,  the natural map $X \to \mathbb{P}(H^0(T^* \cF^{\otimes n}))^*$ induced by the complete linear series induces a fibration $\pi: X \to B$
whose generic fiber is either rational or elliptic. By construction, the class of a generic fiber of $\pi$ is a multiple of  $(T^*\cF)$. The foliation is moreover transversal to a generic fiber of $\pi$, see~\cite[\S 9.2]{Br}.
Since $\f^* T^*\cF = T^*\cF$ in $\NS(X)$ by Proposition~\ref{P:key2}, the map $\f$ preserves this fibration and induces a holomorphic map $f$ on the base that is invertible.

Suppose first that $f = \mathrm{id}$ in $B$. Then $\f$ induces a holomorphic map on a generic fiber whose topological degree is $e(\f) \ge 2$. In particular, replacing $\f$ by a suitable iterate, we may assume that it admits at least three fixed points that are repelling along the fiber. Let $C$ be the irreducible component of the fixed point set of $\f$ passing through such a point $p$. Then we have $\pi(C) = B$, and $C \cdot \pi^{-1}(b) \ge 3$ for any $b \in B$.  At any point near $p$, the differential of $\f$ has two eigenvectors, one with eigenvalue $1$ and tangent to $C$, and the other one with eigenvalue of modulus $>1$ and tangent to the fiber. Since $\f$ preserves $\cF$ and the foliation is transversal to $\pi$, we conclude that $C$ is an $\cF$-invariant curve.
By Proposition~\ref{P:key3}, the set of $\cF$-invariant compact curves is totally invariant by $\f$.
But no non-invertible holomorphic self-map on a rational or an elliptic curve admits  a totally invariant finite set of cardinality greater or equal to $3$.
We conclude that $f\neq \mathrm{id}$. The same argument shows that $f$ is not periodic, and thus the base $B$ is either elliptic or rational.

Note that we may push this argument further. Indeed, if $f$ admits no totally invariant finite subsets,
then the fibration has no $\cF$-invariant fiber. In this case, $\cF$ is a suspension and has Kodaira dimension $0$.
Therefore $B$ is a rational curve, and $f$ has either one or two totally invariant points, each one of them corresponding to an $\cF$-invariant fiber of the fibration.

To conclude the proof, we need to show that the singular set of $X$ is included in the $\cF$-invariant fibers. To do so, pick an arbitrary point $p \in X$ not lying on an $\cF$-invariant fiber. A local neighborhood $U$ of $p$ is given by the quotient of $\C^2$ by a cyclic group generated by a map $h(x,y)= (\zeta x, \xi y)$ for some roots of unity $\zeta$ and $\xi$. Denote by $g$ the natural projection $g: \C^2 \to U$. The foliation $g^*\cF$ is smooth since $\cF$ is a nef foliation. The fibration
induced by $ \pi \circ g$ in $\C^2$ has to be transversal to $g^*\cF$ otherwise $\cF$ and the fibration would have some tangencies near $p$. Using the invariance by $h$, we conclude that $\pi \circ g (x,y) = x$, and $ \cF$ is given by $dy$. Now at a generic point of the fiber containing $p$, the differential of the map $\pi$ has rank $1$, because $f$ is invertible. This forces $\zeta =1$, whence $X$ is smooth at $p$.
\end{proof}

\subsection{The case of a fibration} We now deal with foliations admitting rational first integrals.

\begin{thm}\label{T:clas-fibr}
Let  $\f$ be a dominant non-invertible rational map preserving a fibration $\pi: X \to B$.
Up to birational conjugacy and to a finite cyclic cover, we are in one of the following four
mutually exclusive cases.
\begin{enumerate}
\item[(Fib1)] The surface $X = B \times F$ is a product of two Riemann surfaces with $g(F) \ge 1$ and $g(B) \le 1$,
the fibration is the projection onto the first factor
$B \times F \to B$, and $\f = (f(x), \varphi(y))$ is a product map with $f$ not periodic.
\item[(Fib2)] The surface is the torus $X = E \times E$ for some elliptic curve, and $\f$ is
skew product $\f = (f(x), \varphi(x)(y))$ with $f$ not periodic, and $\varphi: E \to \mathrm{End}(E)$ is holomorphic and non constant.
\item[(Fib3)] The fibration is elliptic and  the action of $\f$ is periodic on the base.  In other words, some iterate of $\f$ is an  endomorphism of an elliptic curve over the function field $\C(B)$.
\item[(Fib4)] The fibration is rational, $ \pi: X \= \PP^1 \times B \to B$,
and $\f$ is a skew product $\f (x,y) = (f(x), \varphi(x)(y))$
where $f$ is a holomorphic map on $B$, and $\varphi$ is rational map from $B$ to the space of rational maps of a fixed degree on $\PP^1$.
\end{enumerate}
\end{thm}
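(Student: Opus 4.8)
The plan is to analyse the finite morphism $f\colon B\to B$ that $\f$ induces on the base (it is regular since $B$ is a smooth curve), together with the generic fibre of $\pi$. Let $g_F$ be the genus of a generic fibre $X_b$ and $r\ge 1$ the degree of the morphism $\f\colon X_b\to X_{f(b)}$ induced between generic fibres, so that $e(\f)=r\cdot\deg f$. Two preliminary remarks: if $g_F\ge 2$ then Riemann--Hurwitz applied to $\f\colon X_b\to X_{f(b)}$ (both of genus $g_F$) forces $r=1$; and if $f$ is not periodic then $g(B)\le 1$, since a non-constant self-map of a curve of genus $\ge 2$ is an automorphism, hence periodic. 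The proof is then a case analysis on $g_F$, on $r$, and on whether $f$ is periodic, throughout which we replace freely $X$ by a birational model and $\f$ by its lift to a finite cyclic cover, liftability being checked by a fundamental-group argument as in the proof of Lemma~\ref{L:lift}.

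First the case $g_F=0$. Then $\pi$ is a $\PP^1$-fibration and, by Tsen's theorem, its generic fibre has a rational point, so $X$ is birational over $B$ to $\PP^1\times B$; in coordinates adapted to this product, $\f(x,y)=(f(x),\varphi(x)(y))$ with $\varphi(x)$ a rational self-map of $\PP^1$ of degree $r$ for generic $x$: this is case (Fib4). Next the case $g_F\ge 2$, so $r=1$, the fibre maps are isomorphisms, and $\deg f=e(\f)\ge 2$, hence $B=\PP^1$. The morphism $m\colon B\to\overline M_{g_F}$ recording the isomorphism class of the fibre satisfies $m\circ f=m$; lifting to the normalisation of its image one sees that a non-constant morphism from $\PP^1$ to a variety invariant under a self-map of $\PP^1$ of degree $\ge 2$ is impossible, so $m$ is constant and the fibration is isotrivial. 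Trivialising its monodromy by a finite cyclic cover gives $X=\PP^1\times F$ and $\f=(f(x),\varphi(x))$ with $\varphi\colon\PP^1\to\aut(F)$; since $g(F)\ge 2$ the group $\aut(F)$ is finite, so $\varphi$ is constant, $\f$ is a product map and $f$ is not periodic: case (Fib1).

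It remains to treat the elliptic case $g_F=1$, where each induced fibre map is an \'etale morphism of degree $r$ between genus-one curves, so $X_{f(b)}$ is $r$-isogenous to $X_b$. If $r=1$ these maps are isomorphisms, and the argument just given (with the $j$-invariant in place of $m$) yields $B=\PP^1$, $X=\PP^1\times E$ and $\f=(f(x),\varphi(x))$ with $\varphi\colon\PP^1\to\aut(E)$; as $\aut(E)$ is an extension of a finite group by the translations of $E$ and $\PP^1$ carries no non-constant morphism to either factor, $\varphi$ is constant and we are in (Fib1). If $r\ge 2$ and $f$ is periodic, we are by definition in (Fib3): after replacing $\f$ by an iterate we may take $f=\mathrm{id}_B$, so $\f$ acts fibrewise, and moving the zero section to the fixed point of each fibre map (which exists, $\psi-[1]$ being an isogeny) makes $\f$ an endomorphism of the elliptic curve $X_\eta$ over $\C(B)$. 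If $r\ge 2$ and $f$ is not periodic, then $g(B)\le 1$, and I claim the fibration is isotrivial. For generic $b$ the orbit $\{f^n(b)\}_{n\ge 0}$ is infinite, and $X_{f^n(b)}$ is $r^n$-isogenous to $X_b$, hence cyclically $N_n$-isogenous to it for some $N_n\mid r^n$ after stripping off the part that factors through a multiplication endomorphism $[a]\colon X_b\to X_b$ (which does not change the $j$-invariant). If the $N_n$ are bounded along a subsequence, then $j(X_{f^n(b)})$ takes finitely many values there, contradicting that $j$, if non-constant, is finite-to-one while the orbit is infinite; if $N_n\to\infty$, then for $n$ large $X_0(N_n)$ has genus $\ge 2$, while $B$ (of genus $\le 1$) maps non-constantly to it, contradicting Riemann--Hurwitz. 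Hence $j$ is constant; trivialising the monodromy, $X=B\times E$ and $\f(x,y)=(f(x),\psi(y)+t(x))$ with $\psi\in\mathrm{End}(E)$ of degree $r$ (constant over the connected base) and $t\colon B\to E$ a morphism. If $t$ is constant---automatic when $g(B)=0$---then $\f$ is a product map: case (Fib1); if $t$ is non-constant, then $g(B)=1$, $t$ is an isogeny up to translation, $B$ is isogenous to $E$, and after a further finite cyclic cover $B=E$, so $\f$ is a skew product over $E$ with non-constant fibre component: case (Fib2). Finally, (Fib1)--(Fib4) are mutually exclusive, being distinguished by the genus of the fibre, by periodicity of $f$, and by whether $\f$ is a product map or a skew product with non-constant fibre component.

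The main obstacle should be the elliptic case with $f$ not periodic: proving isotriviality and the product structure. Isotriviality rests on the dichotomy above between bounded and unbounded cyclic levels $N_n$ of the iterated fibre isogenies along an $f$-orbit---the bounded case (for instance when the fibre maps are multiplications up to translation, so the $j$-invariants along an orbit coincide) using that $j$ is finite-to-one, the unbounded case using that the genus of $X_0(N)$ tends to infinity, and both using that $f$-orbits are infinite. Some further bookkeeping with finite covers is then needed to control the monodromy of the isotrivial fibration and, in case (Fib2), to arrange that the base and the fibre are the same elliptic curve.
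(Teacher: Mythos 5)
Your approach is genuinely different from the paper's, and in one respect more complete. The paper's proof asserts without argument that isotriviality follows from $f$ not being periodic, then invokes semi-stable reduction, an orbifold structure on the base, and Cantat's result that rational maps preserving orbifold-locally-trivial fibrations are holomorphic, and finally classifies the orbifold bases $(B,D_B)$ that carry non-periodic self-maps. You avoid this machinery and work directly with Riemann--Hurwitz on the fibres, the moduli map, and (for elliptic fibres with $r\ge 2$) the growth of the genus of the modular curves $X_0(N)$. The $X_0(N)$ argument is a genuine addition: in the elliptic case with $r\ge 2$ the fibre maps are isogenies, not isomorphisms, so the $j$-invariant need not be $f$-invariant and the usual moduli-map argument does not apply; the paper never addresses this, whereas your dichotomy between bounded and unbounded cyclic levels along an $f$-orbit closes the gap cleanly.

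There is, however, a slip that leaves a sub-case uncovered. In the cases $g_F\ge 2$ and $g_F=1$, $r=1$, you deduce ``$\deg f=e(\f)\ge 2$, hence $B=\PP^1$.'' This implication is false: $\deg f\ge 2$ on a curve only forces $g(B)\le 1$ (which you already observed), and an elliptic base does support non-invertible holomorphic self-maps (isogenies composed with translations). For $g_F\ge 2$ the omission is harmless: with $B$ elliptic one still obtains isotriviality (the fibres of the moduli map are $f$-totally invariant finite sets, and a degree $\ge 2$ holomorphic self-map of a torus has no exceptional points), the monodromy into the finite group $\aut(F)$ is trivialized by a finite cover, and the fibre component of $\f$ is again constant, landing in (Fib1). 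But for $g_F=1$, $r=1$, $B$ elliptic, the fibre component may have a non-constant translation part: $\f(x,y)=(f(x),\alpha(y)+t(x))$ with $\alpha\in\aut(E,0)$ fixed, $t:B\to E$ non-constant, and $\deg f\ge 2$ --- for instance $(x,y)\mapsto(2x,y+x)$ on $E\times E$. This is a legitimate configuration with $r=1$ that falls into (Fib2), whereas your write-up routes it to (Fib1), and your treatment of (Fib2) comes only from the $r\ge 2$ branch. The repair uses exactly the analysis you already carry out for $r\ge 2$ (constant versus non-constant $t$, followed by an isogeny to identify $B$ with $E$), so the idea is present, but as written the classification is incomplete.
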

\begin{proof}
Let $f$ be the induced map on the base.
We may assume $\pi$ is not a rational fibration, otherwise we are in case~(Fib4).
If $f$ is periodic, since $\f$ is not invertible the generic fiber is elliptic and we are in case~(Fib3).
If $f$ is not periodic, the fibration is isotrivial: there exists a Riemann surface $F$ such that
$\pi^{-1}(b)$ is isomorphic to $F$ for almost all $b$.
By the semi-stable reduction theorem, see~\cite[III.10]{BPV}, up to birational conjugacy,
a local model near a singular fiber is given by a quotient $ \mathbb{D}\times F$ by a cyclic group
generated by a map of the form $(x,y) \mapsto (\zeta x, h(y))$ with $\zeta$ a root of unity, and $h$ a finite order automorphism of $F$. To any such singular fiber $\pi^{-1}(b)$ with $b \in B$, we let $n(b) \in \N^*$ be the order of its associated group. The divisors $D_B = \sum (n(b)-1) [b]$ and $D = \sum (n(b)-1) \pi^{-1}(b)$ induce natural orbifold structures on $B$ and $X$ respectively and we denote by $B^{\mathrm{orb}}$, $X^{\mathrm{orb}}$ the associated orbifolds. Then one can rephrase the semi-stable reduction theorem by saying that $\pi: X^{\mathrm{orb}} \to B^{\mathrm{orb}}$ is a locally trivial fibration in the orbifold sense. Any rational map preserving such a fibration is holomorphic, see for instance~\cite{cantat}. Since $f : B^{\mathrm{orb}} \to B^{\mathrm{orb}}$ is holomorphic and not periodic, we are in one of the following situations: $(B,D_B) = (\PP^1 ,0)$, $(\PP^1 , p)$, $(\PP^1, p+q)$,   $(\PP^1,2p)$, or $(E,0)$ for some elliptic curve (here $p \neq q$). For monodromy reasons, the cases $(\PP^1 ,p)$ and $(\PP^1 ,2p)$ are excluded.
A base change of order $2$ allows one to reduce the case $(\PP^1, p+q)$ to $(\PP^1 ,0)$.
In the latter case, we have $X = \PP^1 \times B$. Since by assumption $g(B) \ge 1$,
the map $\f$ is a product: $\f(x,y) = ( f(x), \varphi(y))$. In the case $B$ is an elliptic curve,
since we assume $X$ to be k\"ahlerian, it is a torus up to a finite cover. By Poincar\'e irreducibility theorem, $X$ is a product of two elliptic curves $B \times B'$. When the curves are not isogeneous, we fall into case~(Fib1). Otherwise, we fall into either case~(Fib1) or~(Fib2) of the theorem.
This concludes the proof.\end{proof}

\subsection{Modular foliations}
Let us briefly recall the geometric context in which modular foliations arise. We refer to~\cite{LG} for a detailed description of the construction.
Let $\Gamma$ be an irreducible lattice in $\PSL(2,\R)\times \PSL(2,\R)$. The quotient  $X = \HH^2 /\Gamma$ is a quasi-projective
variety which may be projective or not. In any case, it admits a projective compactification $\bar{X}$  by adding finitely many cusps.
When $X$ is not projective $\hat{X}$ will stand for  the minimal desingularization of  the cusps of $\bar{X}$. To keep the notation uniform
$\hat X$ will be set equal to $X$ when $X$ itself is already projective.
We point out that $\hat X$ is not necessarily smooth, since we are resolving only the cusps, but has at worst  cyclic quotient singularities.

The two natural projections $\HH^2$ induce two foliations by holomorphic disks on the bidisk.
We denote by $\cF$ and $\cG$ their respective images in $X$, $\bar{X}$ and $\hat{X}$.
In $\hat{X}$, $T^*\cF$ and $T^*\cG$ are nef and satisfy $T^*\cF \otimes T^*\cG = K_{\hat{X}} \otimes \cO_{\hat{X}}(D)$ with $D$ supported on the exceptional divisor of the projection $\pi: \hat{X} \to \bar{X}$.
Although the cotangent sheaves of $\mathcal F$ and $\mathcal G$ in $\bar X$ are not locally free, we can still write
 $T^*\cF + T^*\cG = K_{\bar{X}}$ in $\NS(\bar{X})$ if we interpret $T^* \cF, T^* \cG \in \NS(\bar{X})$ as the direct images under $\pi$  of
 $T^* \cF, T^* \cG \in \NS(\hat{X})$.  It is a fact that the two classes
$T^*\cF$ and $T^*\cG$ are not proportional, see for instance~\cite[\S 2.2.4]{LG}.

\begin{prop}\label{P:modular}
Suppose $\cF$ is a modular foliation.
Then any rational map $\f$ preserving $\cF$ is invertible.
\end{prop}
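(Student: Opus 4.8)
The plan is to exploit the rigidity coming from the fact that a modular foliation comes equipped with a \emph{companion} foliation $\cG$ (the image of the second projection), and that $T^*\cF$ and $T^*\cG$ are two non-proportional nef classes which together reconstruct the canonical class of $\hat X$. The strategy is to suppose, for contradiction, that $\f$ is dominant non-invertible, pass to a nef birational model as in Theorem~\ref{T:McQ}, and then apply Proposition~\ref{P:key2} to both $\cF$ and the pulled-back companion foliation. First I would check that $\f$ also preserves $\cG$: on the bidisk $\HH^2$ the two tautological foliations are the only foliations invariant under the action of $\Gamma$ (they are intrinsic to the product structure), so a rational self-map of $X$ either preserves $\cF$ and $\cG$ separately, or exchanges them; replacing $\f$ by $\f\circ\f$ we may assume both are preserved. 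Hence $\f$ preserves \emph{two} non-proportional foliations simultaneously.

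Next I would run the positivity argument. By Theorem~\ref{T:McQ} applied to $\cF$ (and separately to $\cG$), after a birational modification both $T^*\cF$ and $T^*\cG$ become nef. Proposition~\ref{P:key2} then forces $(T^*\cF)^2 = (T^*\cG)^2 = 0$ and $\f^* T^*\cF = T^*\cF$, $\f^* T^*\cG = T^*\cG$ in $\NS$. Moreover Proposition~\ref{P:key2} says the critical set of $\f$ is invariant by \emph{both} $\cF$ and $\cG$; since these are transverse at a generic point (having non-proportional cotangent classes, and being transverse on the bidisk), any irreducible critical curve $E$ would have to be simultaneously $\cF$- and $\cG$-invariant, which is impossible off the cusps because the two foliations are everywhere transverse on the open part $\HH^2/\Gamma$. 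Thus the critical set of $\f$ is contained in the compactification divisor $\hat X \setminus (\HH^2/\Gamma)$. This already says $\f$ is an unramified self-cover of $\HH^2/\Gamma$ (in the orbifold sense), and one expects this to contradict non-invertibility because the orbifold Euler characteristic of $\HH^2/\Gamma$ is nonzero (Hirzebruch proportionality / Gauss–Bonnet for irreducible lattices gives it a nonzero, indeed positive multiple of $\chi(\HH)^2$), so an unramified cover of degree $>1$ onto itself cannot exist.

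Alternatively, and perhaps more cleanly, one completes the argument at the level of Néron–Severi. We have $\f^*$ fixing the two independent classes $T^*\cF$, $T^*\cG$, hence $\f^*$ fixes their sum; but modulo the exceptional divisor of $\pi\colon\hat X\to\bar X$ this sum is $K_{\hat X}$ (using $T^*\cF + T^*\cG = K_{\bar X}$ in $\NS(\bar X)$ and tracking the boundary correction $\cO(D)$), and $\f^* K = e(\f)^{?}$-type relations combined with $\Delta_\f = 0$ near the open part force, via~\eqref{E:jacob}, that $K_{\hat X} = \f^* K_{\hat X} + \Delta_\f$ with $\Delta_\f$ supported on the boundary only. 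Pairing with an ample class and using that $\f^*$ multiplies self-intersections of nef classes by $e(\f)$ while $(T^*\cF + T^*\cG)^2 = 2\,T^*\cF\cdot T^*\cG > 0$ (the two nef classes are non-proportional, so Hodge index gives strict positivity of the cross term) yields $(T^*\cF + T^*\cG)^2 \ge e(\f)(T^*\cF + T^*\cG)^2$, forcing $e(\f) = 1$, i.e.\ $\f$ is invertible.

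The main obstacle I anticipate is the bookkeeping at the cusps: $T^*\cF$ and $T^*\cG$ are genuinely nef only on the desingularized model $\hat X$, which may carry quotient singularities, and the identity $T^*\cF + T^*\cG = K$ holds only up to a boundary divisor $D$; one must be careful that the modifications needed to make $T^*\cF$ nef (Theorem~\ref{T:McQ}) and those needed for $T^*\cG$ are compatible, or else argue on a common model dominating both. A secondary point requiring care is justifying that $\f$ (or $\f^2$) genuinely preserves $\cG$ and not merely permutes $\cF,\cG$ with something else — this uses that on an irreducible quotient $\HH^2/\Gamma$ the tautological foliations are the \emph{only} algebraic foliations invariant under all of $\Gamma$, a rigidity statement for irreducible lattices that should be quoted from the literature on modular foliations (e.g.\ \cite{LG}). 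Once these two technical points are secured, the positivity computation closes the argument immediately.
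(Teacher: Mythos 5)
Your proposal is largely sound but takes a genuinely different route, and it contains one step that is asserted as a quotable fact when it is really the heart of the matter. The paper gives two arguments. Its first argument reaches the same intersection-theoretic endgame you do, but it \emph{avoids} the step you flagged as delicate, namely showing that $\f$ preserves $\cG$ as a foliation. Instead it exploits Proposition~\ref{P:key3} to get that $\f$ is a holomorphic unramified cover of the open part $X=\HH^2/\Gamma$, then uses normality of $\bar X$ and finiteness of $\bar X\setminus X$ to extend $\f$ holomorphically to $\bar X$ with empty critical locus; this immediately gives $\f^*K_{\bar X}=K_{\bar X}$ in $\NS(\bar X)$ via~\eqref{E:jacob}, and since $\f^*T^*\cF=T^*\cF$, the purely cohomological identity $T^*\cF+T^*\cG=K_{\bar X}$ forces $\f^*T^*\cG=T^*\cG$ \emph{as classes}, with no need for foliation rigidity. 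The subsequent step --- a nef class of positive self-intersection in the span of the two cotangent classes, combined with holomorphicity of $\f$ so that $e(\f)\om^2=(\f^*\om)^2=\om^2$ --- is exactly your second ``alternative'' closing computation, so that part of your proposal matches the paper.

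The genuine gap in your write-up is the claim that ``on $\HH^2/\Gamma$ the tautological foliations are the only foliations invariant under $\Gamma$,'' which you want to quote from the literature. This is not a standard black-box; it is in fact what the paper's \emph{second} argument proves, for the specific map $\f$ at hand, by lifting $\f$ to $\HH^2$, writing it in product form, and invoking Borel density of $\Gamma$ to force the fiberwise M\"obius coefficients to be constant. You should either reproduce something like that argument or replace this step. The good news is that your proposal contains a cleaner escape that you undersold: your orbifold Euler-characteristic argument does not need $\cG$ at all. Proposition~\ref{P:key3} applied to $\cF$ alone already shows $\f:\cU\to\cU$ is a proper unramified orbifold self-cover, where $\cU$ is the open modular part, and $\chi^{\mathrm{orb}}(\HH^2/\Gamma)\neq 0$ (finite covolume plus Gauss--Bonnet) forces $e(\f)=1$. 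If you flesh out the bookkeeping at the cusps (so that the multiplicativity $\chi^{\mathrm{orb}}(\cU)=e(\f)\,\chi^{\mathrm{orb}}(\cU)$ is justified for this proper unramified orbifold cover), that is a third, self-contained proof, arguably more elementary than either of the paper's two arguments, and it removes the $\cG$-rigidity dependency entirely.
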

\begin{proof}

We give two independent arguments.

\noindent {\bf First argument.}
We may assume $\f$ is a map on $\hat{X}$ that preserves $\cF$ in the notation above. For sake of convenience, we shall also write $\f$ for its induced map on $\bar{X}$.
Since $T^*\cF$ is nef, we may apply Proposition~\ref{P:key3}. Here $\cU$ coincides with $\mu^{-1}(X)$, hence $\f$ is an unramified holomorphic finite covering  from $X$ to itself. Since $\bar{X}$ is normal and $\bar{X}\setminus X$ is a finite set, $\f$ is holomorphic on $\bar{X}$. Note that the critical set of $\f$ on $\bar{X}$ remains empty. We conclude that in the space $\NS(\bar{X})$, we have $\f^* K_{\bar{X}}= K_{\bar{X}}$ and $\f^* T^*\cF=T^* \cF$, which implies $\f^* T^* \cG = T^* \cG$.
Now the two-dimensional vector space generated by $T^* \cF$ and $T^* \cG$ is $\f^*$-invariant and contains
a nef class $\om$ of positive self-intersection.
Since $\f^*$ is holomorphic, we get $e(\f) \om^2 = (\f^* \om)^2 = \om^2$, hence $\f$ is an automorphism.

\noindent {\bf Second argument.}
By Proposition~\ref{P:key3}, $\f$ is an unramified cover of  $X$, hence preserves $S=\mathrm{sing}(X)$, the set of singular points of $X$.
We deduce that $\f$ induces a non-ramified finite covering  from $X\setminus \f^{-1}(S)$ onto $X\setminus S$.
It can thus be lifted to a map $\hat{\f}$ between the universal covering s of these spaces which are both isomorphic to $\HH^2$ minus a discrete  set. By Hartog's theorem, $\hat{\f}$ extends as a map from $\HH^2$ to itself without
ramification points. In explicit coordinates $(x,y) \in \mathbb H^2$ where the lifting of $\mathcal F$ is given by the projection onto the
first factor we can thus write
\[
\widehat \f(x,y) = \left( \frac{a_1 x + b_1}{c_1 x + d_1 }   ,  \frac{a_2( x) y  + b_2(x)}{c_2(x) y  + d_2(x) } \right)
\]
where $a_1,b_1,c_1,d_1$ are constants, $a_2,b_2,c_2,d_2$ are holomorphic functions in $\mathbb H$, both satisfying $a_i d_i - b_i c_i = 1$.
Because $\widehat \f$ descends to $\mathbb H^2 / \Gamma$ it must satisfy $\widehat \f \, \Gamma \, \widehat{\f}^{-1} \subset \Gamma$.
We point out  that $\Gamma$ is a Zariski dense subgroup of $G$ by Borel density theorem.
Taking Zariski closures in $G$
we see that $\widehat{\f} \, G \,  \widehat{\f}^{-1} \subset G $. The specialization of this last equation
to certain elements of $G$ allows one to conclude that the functions $a_2,b_2,c_2$ and $d_2$ are indeed constants.
Thus  $\widehat{\f} \in G$  and $\widehat{\f} \, \Gamma \, \widehat{\f}^{-1} = \Gamma$. Consequently $\f_{X}$ is a biholomorphism (indeed an
isometry with respect to the products of  Poincar\'{e} metrics).
\end{proof}

\subsection{Proof of Theorem~A and Corollary~B}

Let us begin with Theorem~A.
Pick $\cF$ a foliation without rational first integral. By Seidenberg's,
Miyaoka's and McQuillan's theorems, we may assume $X$ is a model (with at most quotient singularities) in which $\cF$ has reduced singularities and $T^*\cF$ is nef.
By Proposition~\ref{P:key2}, $(T^*\cF)^2 =0$ hence $\cF$ can not be of general type.
Thanks to the classification of foliated surfaces, we are in one of the following three cases:
either $\cF$ is a modular foliation, or $\kod(\cF) =0$, or $\kod (\cF) =1$.
The modular case is excluded by Proposition \ref{P:modular}.

When $\kod(\cF) =0$, we may apply Theorem~\ref{T:clas-kod0}. It is clear that Theorem~A holds in this case: the lift of $\cF$ to $\C^2$ is a foliation induced by a constant vector field. When $\kod(\cF)=1$, we apply Theorem~\ref{T:clas-kod1}. When $\cF$ is a Riccati foliation,
then if we consider the  map $\varphi: \C^2 \rightarrow \C^* \times \C^*$ given by
\[(x,y) \mapsto \left( \exp\left(\frac{x}{n+1}\right), \exp \left( y - \frac{mx}{(n+1)(k-1)}\right) \right)\]
then
\[
 \varphi^* \left( \frac{dy}{y} + \frac{m}{k-1} \frac{dx}{x} +  \mu x^n dx \right)
= dy + \frac{\mu e^x}{n+1} dx \, ,
\]
which is induced by a vector field. On the other hand, $\varphi^{-1} \circ \f \circ \varphi = ( x+\mu, ky + l x + \nu)$  for some $\mu, \nu$ and $l$ and is thus Latt\`es-like.
When $\cF$ is a turbulent foliation, one considers the map
$\varphi: \C \times E \to \C^* \times E$ given by $\varphi(x,y)\= (\exp\frac{x}{n+1}, y)$. Then
$\varphi^* \om = dy +\frac{e^x}{n+1} dx$ and $\varphi^{-1} \circ \f \circ \varphi = ( x+\mu, e^\mu y)$ with
$e^\mu = \la^{n+1}$.
This concludes the proof of Theorem~A.

\begin{proof}[Proof of Corollary~B]
Let $ \f : X \dashrightarrow X$ be a dominant rational map preserving a foliation $\cF$.
Suppose $\kod(\cF)=0$ and $\cF$ has no rational first integral. Then there exists a
projective surface $\hat{X}$, a dominant map $\hat{f}: \hat{X} \dto \hat{X}$, a foliation $\hat{\cF}$, and a cyclic group of automorphisms $G \subset \mathrm{Aut}\,(\hat{X})$ such that $(\hat{\f}, \hat{X}, \hat{\cF})$ is in the list given by Theorem~\ref{T:clas-kod0}, $G$ preserves $\hat{\cF}$ and commutes with $\hat{\f}$ and
 $(\f,X,\cF)$ is birationally conjugated to the quotient
situation on $\hat{X}/G$.
In cases $\kappa_0(2)$, $\kappa_0(3)$ and $\kappa_0(4)$,  the map $\hat{\f}$ preserves a unique rational fibration. This fibration is automatically $G$-invariant, hence $\f$ preserves a rational pencil.
In the case $\kappa_0(1)$, the lift $M$ of $\hat{\f}$ to $\C^2$ is diagonalizable. Since $MG = GM$ and $G$ is cyclic, $M$ and $G$ have two fixed points in common on $\PP(\C^2)$.
The projective space $\PP(\C^2)$ naturally parameterizes the set of all linear foliations, hence $\f$ preserves the two associated foliations.
In the case $\kappa_0(5)$, the same argument applies.

When $\kod(\cF) =1$ and $\cF$ has no rational first integral, we apply Theorem~\ref{T:clas-kod1}. The arguments are essentially the same. In the case $\kappa_1(1)$, the group $G$ that may appear are explicit, and the rational fibrations given by $dx$ and $y^{-1}dy + m[(k-1)x]^{-1}dx$ are both $G$-invariant. In the case $\kappa_1(2)$ there is a unique invariant elliptic fibration.

Finally when $\cF$ is a fibration which is neither rational nor elliptic,  we fall into case (Fib$1$).
\end{proof}


\begin{thebibliography}{9}

\bibitem{BPV}
{\sc W. Barth, C. Peters, and A. Van de Ven},
{\it Compact complex surfaces.}
Ergebnisse der Mathematik und ihrer Grenzgebiete (3), 4. Springer-Verlag, Berlin,
1984. x+304 pp.


%



\bibitem{Br}
{\sc M. Brunella},
\emph{Birational Geometry of Foliations.}
Instituto de Matem\'atica Pura e Aplicada (IMPA), Rio de Janeiro, 2004. iv+138 pp.

\bibitem{Br2}
{\sc M. Brunella},
\emph{Foliations on Complex Projective Surfaces.}
Dynamical systems. Part II, Pubbl. Cent. Ric. Mat. Ennio Giorgi, Scuola Norm. Sup. (2003), 49--77.



\bibitem{cantat}
{\sc S. Cantat},
{\it Caract\'erisation des exemples de Latt\`es et de Kummer},
 Compos. Math.  144  (2008),  no. 5, 1235--1270.


\bibitem{CF}
{\sc S. Cantat and C.  Favre},
{\it Sym\'etries birationnelles des surfaces feuillet\'ees.}
J. Reine Angew. Math.  {\bf 561}  (2003), 199--235.
{\it Corrigendum.}
J. Reine Angew. Math.  {\bf 582}  (2005), 229--231.




\bibitem{DJ}
{\sc M. Dabija and M. Jonsson},
{\it Endomorphisms of the plane preserving a pencil of curves.}
Internat. J. Math. \textbf{19} (2008), no. 2, 217--221.

\bibitem{DJ2}
{\sc M. Dabija and M. Jonsson},
{\it Algebraic webs invariant under holomorphic mappings.}
To appear in Publ. Mat.




\bibitem{Dinh-Sibony}
{\sc T.-C. Dinh and N. Sibony},
{\it Sur les endomorphismes holomorphes permutables de ${\PP}\sp k$.}
Math. Ann.  324  (2002),  no. 1, 33--70.


\bibitem{DF}
{\sc J. Diller and C. Favre},
{\it Dynamics of bimeromorphic maps of surfaces.}
Amer. J. Math. \textbf{123} (2001), no. 6, 1135--1169.


\bibitem{FP}
{\sc C. Favre and J.V. Pereira},
{\it Rational  maps preserving webs.}
In preparation.


 \bibitem{Hae}
 {\sc A. Haefliger},
 {\it Orbi-espaces.}
  Sur les groupes hyperboliques d'apr\`{e}s Mikhael Gromov,
 Progr. Math. \textbf{83} (1990), 203--213.



\bibitem{Jo}
{\sc J.P. Jouanolou},
{\it \'{E}quations de Pfaff alg\'{e}briques.}
Lecture Notes in Mathematics, 708. Springer, Berlin, 1979.


\bibitem{McQ}
{\sc M. McQuillan},
{\it Canonical Models of Foliations.}
Pure and Applied Mathematics Quarterly \textbf{4} (2008), no. 3, 887--1012.

\bibitem{mendes}
{\sc L.G. Mend\`es},
{\it Kodaira dimension of holomorphic singular foliations.}
Bol. Soc. Brasil. Mat. (N.S.)  31  (2000),  no. 2, 127--143.


\bibitem{LG}
{\sc L. G. Mend\`es and J. V. Pereira},
{\it Hilbert Modular Foliations on the Projective Plane},
Commentarii Mathematici Helvetici {\bf 80} (2005) 243--291.


\bibitem{JVPS}
{\sc J.V. Pereira and P.F.  S\'{a}nchez},
{\it Transformation groups of holomorphic foliations.}
Comm. Anal. Geom. \textbf{10} (2002), no. 5, 1115--1123.

\bibitem{jvpkod0}
{\sc J.V. Pereira},
{\it On the height of foliated surfaces with vanishing Kodaira dimension.}
Publ. Mat. \textbf{49} (2005), no. 2, 363--373.



\bibitem{Th}
 {\sc W. Thurston},
 {\it The Geometry and Topology of Three-Manifolds Chapter 13}, Eletronic version 1.1 - March 2002,
 \url{http://www.msri.org/publications/books/gt3m}.


\end{thebibliography}
\end{document}